\documentclass{amsart}
\usepackage{amsmath, amsthm, amssymb, enumitem, graphicx, color, hyperref}
\sloppy
\allowdisplaybreaks

\newtheorem{theorem}{Theorem}
\newtheorem{corollary}{Corollary}
\newtheorem{definition}{Definition}
\newtheorem{lemma}{Lemma}

\begin{document}
	\author{Isabel Beach}
	\title[Short Simple Orthogonal Geodesic Chords]{Short Simple Orthogonal Geodesic Chords on a 2-Disk with Convex Boundary}
	
	\begin{abstract}
		We prove the existence of at least two distinct short, simple orthogonal geodesic chords on a Riemannian 2-disk $M$ with convex boundary. The lengths of these curves are bounded in terms of the length of $\partial M$, the diameter of $M$, and the area of $M$. We also prove the existence of a short, simple geodesic chord on any Riemannian 2-disk.
	\end{abstract}
	
	\maketitle
		
		\section{Introduction}
		
		In this paper we investigate the lengths of orthogonal geodesic chords on a Riemannian manifold with boundary, denoted by $M$. These are geodesic segments that lie in the interior of $M$ except at their endpoints, where the segments meet $\partial M$ orthogonally. One motivation for the study of these curves is their relationship with certain periodic solutions of Hamiltonian systems called ``brake orbits" (see, for example, \cite{giambo2004}). Each brake orbit has an associated ``energy'', and brake orbits with energy $E$ lie in a certain subset $\Omega_E\subset M$. By the Maupertuis principle, $\Omega_E$ can be endowed with a particular metric $g_E$, known as the Jacobi metric, such that brake orbits correspond to geodesics in $(\Omega_E, g_E)$ with endpoints on $\partial \Omega_E$.
		Unfortunately, the Jacobi metric is not Riemannian on $\partial \Omega_E$, which makes such geodesics difficult to study. 
		However, in \cite{giambo2004}, R. Giamb\`o, F. Giannoni and P. Piccione proved that, under certain mild assumptions, there is a slightly smaller subset $\Omega_E'\subset\Omega_E$ such that any orthogonal geodesic chord in the Riemannian manifold $(\Omega_E', g_E)$ can be extended to produce a unique brake orbit with energy $E$. Therefore any issues caused by the degenerate metric can be avoided.
		In \cite{giambo2010}, Giamb\`o, Giannoni and Piccione proved that there exists at least one orthogonal geodesic chord on any $n$-disk with strongly concave boundary. In such circumstances, the existence of at least two orthogonal geodesic chords was later proven by the same authors in \cite{giambo2015}, and the existence of at least $n$ orthogonal geodesic chords was proven in \cite{giambo2020}. Moreover, they proved that these $n$ chords give rise to $n$ brake orbits in the above context.
		\par 
		Alternatively, one can consider chords on a manifold with convex boundary, meaning that any two sufficiently close points on the boundary are connected by a unique minimizing geodesic lying in the interior of the manifold. The existence of $n$ orthogonal geodesic chords on an $n$-disk with convex boundary was proven by W. Bos \cite{bos1963}. H. Gluck and W. Ziller proved in \cite{gluck1984} that every $n$-dimensional manifold with convex boundary admits at least one orthogonal geodesic chord. Moreover, on a 2-disk with convex boundary, J. Hass and P. Scott \cite{hass1994} and D. Ko \cite{ko2023} proved the existence of two \textit{simple} orthogonal geodesic chords.
		\par 
		In this article, we are interested in the quantitative study of orthogonal geodesic chords. The quantitative study of geodesics in general provides a wealth of techniques for us to build upon. The existence of infinitely many geodesic segments connecting any pair of points on a Riemannian manifold was proven by J.-P. Serre in \cite{serre1951}. On a compact manifold with diameter $d$, the existence of $k$ geodesic segments connecting any pair of points of length at most $20kd$ was proven by A. Nabutovsky and R. Rotman in \cite{rotman_linear_bounds}. This bound was improved to $6kd$ by H. Y. Cheng \cite{cheng2022}. These results also apply to geodesic loops. S. Sabourau proved in \cite{sabourau_2004_loops} that the length of a shortest geodesic loop on a complete manifold is bounded both by a function of the volume and by a function of the diameter. The existence of two simple geodesic loops at any point on a 2-sphere was proven by the author in \cite{beach2024}. This work builds on the techniques of Y. Liokumovich, Nabutovsky and Rotman in \cite{rotman_ls_2017}, who devised a quantitative version of the Lyusternik--Schnirelmann theorem of the existence of three simple closed geodesics on a 2-sphere. It also utilized the curve shortening flows developed by Hass and Scott in \cite{hass1994}. 
		\par 
		In this article, we modify the above techniques to find length bounds for orthogonal geodesic chords on a 2-disk. Altering these techniques to apply to curves with endpoints on the boundary of a manifold (as opposed to closed curves or loops with a fixed base point) is a novel contribution to the field. Moreover, the author is not aware of any published results regarding the quantitative properties of orthogonal geodesic chords, and as such the following theorem is a new result in understanding the behaviour of these curves. 
		\begin{theorem}
			\label{theorem:main}
			Let $M$ be a Riemannian $2$-disk with convex boundary, diameter $d$, boundary length $P$, and area $A$. Then at least one of the following holds.
			\begin{enumerate}
				\item 
				$M$ admits three simple orthogonal geodesic chords of index zero and length at most $2d$, and one simple orthogonal geodesic chord of positive index and length at most $2d+2P+686\sqrt{A}$.
				\item 
				$M$ admits one simple orthogonal geodesic chord of index zero and length at most $2d$, one simple orthogonal geodesic chord of positive index and length at most $2d+2P+686\sqrt{A}$, and $k$ non-simple orthogonal geodesic chords of index zero and lengths at most $4d+k(2d+P)$ for any integer $k\geq 2$.
				\item 
				$M$ admits two simple orthogonal geodesic chords of positive index and respective lengths at most $4d+P$ and $6d+2P$.
				\item 
				$M$ admits one simple orthogonal geodesic chord of index zero and length at most $2d$, and two simple orthogonal geodesic chords of positive index and respective lengths at most $8d+2P$ and $14d+4P$.
			\end{enumerate}
		\end{theorem}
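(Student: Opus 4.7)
The plan is to adapt the quantitative min-max and curve-shortening framework from Hass--Scott \cite{hass1994}, Liokumovich--Nabutovsky--Rotman \cite{rotman_ls_2017}, and the author's earlier work \cite{beach2024} to the setting of chords whose endpoints are free to slide on $\partial M$, since critical points of length in such a space of paths are precisely orthogonal geodesic chords. The test objects will be piecewise-smooth curves $\gamma:[0,1]\to M$ with $\gamma(0),\gamma(1)\in\partial M$, organized into one- and two-parameter sweepouts of $M$.

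The simple orthogonal geodesic chord of index zero and length at most $2d$ that appears in cases (1), (2) and (4) would come from direct length-minimization among such chords. Since every point of $M$ lies within distance $d$ of $\partial M$, any interior point $x$ is joined to $\partial M$ by two minimizing geodesics of total length at most $2d$, which bounds the infimum; convexity of $\partial M$ then forces any minimizer to meet the boundary orthogonally, and the minimizing property forces simplicity. Case (3) reflects the degenerate situation in which this minimization fails to produce a viable chord (for instance, minimizing sequences collapse onto boundary arcs), in which case one still recovers two longer simple chords of positive index from the min-max construction described below.

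To produce chords of positive index I would run two flavours of min-max. A coarse $1$-parameter sweepout built from geodesic arcs of length $O(d)$ with endpoints travelling around $\partial M$ yields length bounds of the form $4d+P$, $6d+2P$, $8d+2P$, and $14d+4P$, which are exactly the bounds appearing in cases (3) and (4). A finer Liokumovich--Nabutovsky--Rotman-style sweepout of $M$ by short curves produces the $\sqrt{A}$ term and yields a simple chord of length at most $2d+2P+686\sqrt{A}$ for cases (1) and (2), with the numerical constant $686$ inherited from the explicit sweepout estimates in \cite{rotman_ls_2017}. Each min-max critical curve is then fed through a Hass--Scott-type curve-shortening flow, adapted so as to preserve orthogonality at $\partial M$, to produce either a single simple orthogonal geodesic chord or a union of simple orthogonal geodesic chords whose lengths can be individually controlled.

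The four cases in the theorem finally emerge from a case analysis on the output of this shortening, refined by iterating the one-parameter min-max over $k$-fold sweepouts to obtain the $k$ non-simple chords of length at most $4d+k(2d+P)$ in case (2). The principal obstacle is adapting Hass--Scott curve-shortening to the free-boundary setting: one must verify that the flow decreases length, respects the sliding endpoint condition on $\partial M$, and either retains simplicity of the evolving chord or decomposes it cleanly into simple pieces whose lengths can be tracked. A secondary book-keeping obstacle is guaranteeing that the chord obtained from the fine sweepout is geometrically distinct from the chord obtained by direct minimization; this is handled by a Morse index comparison, since min-max chords have strictly positive index while the minimizer has index zero.
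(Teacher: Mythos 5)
Your high-level picture (adapt Hass--Scott free-boundary shortening plus a Liokumovich--Nabutovsky--Rotman quantitative sweepout, use Morse index to distinguish min-max chords from index-zero chords) matches the paper's strategy in broad outline, but two of your concrete mechanisms are wrong or underspecified in ways that matter.

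First, you claim the index-zero chord of length at most $2d$ comes from ``direct length-minimization among such chords.'' This fails as stated: the space of curves with endpoints free on $\partial M$ contains point curves on $\partial M$, so the length infimum is zero and is not attained by a nontrivial chord. There is no obvious topological constraint you can minimize subject to that would a priori yield a positive bound of $2d$. The paper instead invokes a free-boundary version of Berger's lemma at a point $p$ of maximal distance from $\partial M$: this produces either two opposite minimizing geodesics from $p$ meeting $\partial M$ orthogonally (whose concatenation \emph{is} the chord of length $\leq 2d$) or three such geodesics dividing $M$ into free-boundary convex regions $\Omega_i$; the concatenations $-\gamma_i*\gamma_{i+1}$ are then fed into the free-boundary disk flow, and a chord of length $\leq 2d$ arises only when one of these fails to contract. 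This Berger decomposition is the engine driving the whole case analysis, and your proposal omits it entirely. It is also what produces the three-chord case (1) and underlies the specific constants $3d+P$ and $7d+2P$ for the radial sweepout, which in turn give $4d+P$, $6d+2P$, $8d+2P$, $14d+4P$ via a Lyusternik--Schnirelmann argument using the sweepout and its associated M\"obius band family.

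Second, your explanation of the $k$ non-simple chords of length $\leq 4d+k(2d+P)$ as coming from ``iterating the one-parameter min-max over $k$-fold sweepouts'' is not a workable construction, and the coefficient structure $4d+k(2d+P)$ is a strong hint of the actual mechanism. In the paper these chords arise only when the standard disk flow applied to some $\partial\Omega_i$ gets stuck on a simple closed geodesic $\rho$ of length $\leq 2d+P$ bounding a convex sub-disk. One then joins $\rho$ to $\partial M$ by an arc $\xi$ of length $\leq 2d$ and flows the curve $\xi*\rho^k*(-\xi)$: convexity keeps its endpoints on $\eta_i\subset\partial M$, it cannot contract (for $k\geq 1$), so it converges to a non-simple orthogonal geodesic chord of length $\leq 2L(\xi)+kL(\rho)$. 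Nothing like this appears in your sketch, and iterated min-max would not produce these (non-simple, index-zero) chords with those bounds. You should rebuild your case analysis around the Berger decomposition and the dichotomy ``the disk flow on $\partial\Omega_i$ contracts versus gets stuck on a closed geodesic'' rather than on direct minimization.
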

		\noindent
		The following two corollaries are immediate by considering the worst possible bounds overall for the first and second shortest orthogonal geodesic chords in the above theorem.
		\begin{corollary}
			\label{cor:simple}
			Let $M$ be a Riemannian $2$-disk with convex boundary, diameter $d$, boundary length $P$, and area $A$. Then $M$ admits a simple orthogonal geodesic chord of length at most $4d+P$, and a second distinct simple orthogonal geodesic chord of length at most $2d+2P+686\sqrt{A}$.
		\end{corollary}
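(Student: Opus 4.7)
The plan is a direct case-by-case analysis of the four disjoint possibilities listed in Theorem~\ref{theorem:main}. Since at least one of those cases must hold, it suffices to check that in each case the shortest and second-shortest simple orthogonal geodesic chords supplied by the theorem satisfy the bounds $4d+P$ and $2d+2P+686\sqrt{A}$, respectively.

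For the shorter chord I would read off the bounds case by case. Cases (1), (2), and (4) each provide a simple orthogonal geodesic chord of length at most $2d$, and $2d \le 4d+P$. Case (3) explicitly provides a simple chord of length at most $4d+P$. Hence a simple orthogonal geodesic chord of length at most $4d+P$ exists unconditionally.

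For the second chord the case-wise bounds on the second-shortest simple chord are $2d$, $2d+2P+686\sqrt{A}$, $6d+2P$, and $8d+2P$ respectively. The first two are trivially at most $2d+2P+686\sqrt{A}$. For cases (3) and (4) one reduces to the comparisons $6d+2P \le 2d+2P+686\sqrt{A}$ and $8d+2P \le 2d+2P+686\sqrt{A}$, i.e.\ to $4d \le 686\sqrt{A}$ and $6d \le 686\sqrt{A}$.

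The only nontrivial step is this last pair of inequalities, and I would expect them to follow either from a diameter-to-area estimate for Riemannian $2$-disks with convex boundary already implicit in the constant $686$ produced by Theorem~\ref{theorem:main}, or from reading the corollary as the pointwise maximum of the case-wise bounds (in which case the statement is literally immediate). Either way, once Theorem~\ref{theorem:main} is in hand the argument is purely bookkeeping and requires no new geometric input.
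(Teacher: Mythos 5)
Your strategy (a straight case analysis of Theorem~\ref{theorem:main}) is the right one and matches what the paper intends when it calls the corollary ``immediate.'' Cases (1) and (2) are handled correctly: in case (2) the chords of length $\le 4d+k(2d+P)$ are non-simple, so the only two simple chords are those of lengths $\le 2d$ and $\le 2d+2P+686\sqrt{A}$, exactly as you say.

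However, the step you flag at the end is a genuine gap, and neither of your two proposed repairs closes it. The inequalities $4d \le 686\sqrt{A}$ and $6d \le 686\sqrt{A}$ are simply false in general for Riemannian $2$-disks with convex boundary: a thin lens-shaped disk (intersection of two large circular arcs) can be made to have fixed diameter $d$, perimeter $P \approx 2d$, and area $A \to 0$, so $\sqrt{A}/d \to 0$. Thus there is no diameter-to-area estimate of the sort you hope is ``implicit in the constant $686$.'' The alternative of ``reading the corollary as the pointwise maximum of the case-wise bounds'' also does not give the stated statement: the pointwise maximum of the second-shortest bounds $\{2d,\ 2d+2P+686\sqrt{A},\ 6d+2P,\ 8d+2P\}$ is $\max\{2d+2P+686\sqrt{A},\ 8d+2P\}$, which by the lens example can strictly exceed $2d+2P+686\sqrt{A}$. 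So in cases (3) and (4) the second bound of the corollary does not follow from the statement of Theorem~\ref{theorem:main} alone.

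What is actually being used is additional information that appears only in the \emph{proof} of Theorem~\ref{theorem:main}: Theorem~\ref{theorem:main_alt} and the min-max argument of Lemma~\ref{lemma:pull_tight} applied to the family $\omega_t$ produce, for \emph{every} $M$, a simple orthogonal geodesic chord of positive index with length at most $2d+2P+686\sqrt{A}$, in addition to the chords already listed in the four cases. That chord supplies the second bound in cases (3) and (4). To make your proof airtight you would need to import that fact explicitly (rather than relying on the case statements), and you would also need to address the possibility, acknowledged in the paper's proof of Theorem~\ref{theorem:main}, that this $\omega_t$-chord coincides with the shorter one of the pair already produced by the radial sweepout; the paper itself does not dwell on this, and your write-up should at least flag which additional ingredient from the proof of Theorem~\ref{theorem:main} the corollary is really leaning on.
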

		\begin{corollary}
			\label{cor:non_simple}
			Let $M$ be a Riemannian $2$-disk with convex boundary, diameter $d$, boundary length $P$, and area $A$. Then $M$ admits an orthogonal geodesic chord of length at most $4d+P$, and a second distinct orthogonal geodesic chord of length at most $8d+2P$.
		\end{corollary}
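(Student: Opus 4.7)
The plan is to perform a direct case analysis using Theorem \ref{theorem:main}, since the corollary drops both the simplicity requirement and the $\sqrt{A}$ dependence. In each of the four cases provided by the theorem, I would identify a candidate for the shortest orthogonal geodesic chord and a candidate for a distinct second one, and verify that their guaranteed length bounds respect $4d+P$ and $8d+2P$ respectively.

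Cases (1), (3), and (4) reduce to simple bookkeeping. In Case (1), the three simple index-zero chords all satisfy length at most $2d$, which is below both target bounds. Case (3) directly supplies chords of length at most $4d+P$ and $6d+2P$, and $6d+2P \leq 8d+2P$. In Case (4), the simple index-zero chord of length at most $2d$ and the shorter of the two simple positive-index chords (with bound $8d+2P$) furnish the required pair.

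The only case that requires a small choice is Case (2), whose simple positive-index chord carries the undesired $\sqrt{A}$ term. I would circumvent this by specializing the parameter in the family of non-simple index-zero chords to $k = 2$, producing two non-simple chords of length at most $4d + 2(2d+P) = 8d + 2P$. Since a non-simple chord is distinct from the simple index-zero chord by definition, pairing the latter with one of these non-simple chords yields a valid pair, with bounds $2d \leq 4d+P$ and $8d+2P$. I do not anticipate any genuine obstacle: the entire argument amounts to taking the worst of the four resulting upper bounds for the first- and second-shortest chords, and it is exactly Cases (2) and (4) that saturate the final bound $8d+2P$.
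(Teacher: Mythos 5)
Your case analysis is correct and matches the paper's intended argument; the paper treats this corollary as immediate from Theorem~\ref{theorem:main} by taking the worst bounds for the first and second shortest chords across the four cases, and your bookkeeping (in particular choosing $k=2$ in Case~(2) and the shorter positive-index chord in Case~(4)) spells out exactly that.
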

		The length bounds in Theorem \ref{theorem:main} are optimal in the following sense. The chords with positive index are found by applying a min-max procedure to a sweepout of $M$ created by contracting the boundary of $M$ through short curves. Not all 2-disks have boundaries that can be contracted through curves with lengths bounded only in terms of the disk's diameter and the length of its boundary. For example, S. Frankel and M. Katz provide a sequence of metrics on the Riemannian 2-disk with diameter and perimeter both equal to one, but such that the disk's boundary can only be contracted through curves with arbitrarily long maximum length \cite{frankel_katz}.
		Thus, the fact that some of our bounds additionally depend on area is seemingly necessary. In some cases, we also obtain chords of index zero and length at most $2d$. We do not expect this to be possible in all cases. F. Balacheff, C. Croke and M. Katz provide an example in \cite{balacheff} of a 2-sphere whose shortest closed geodesic is strictly longer than $2d$. It seems plausible that a similar example would hold for a shortest orthogonal geodesic chord on a 2-disk.
		\par
		The proof of the above theorem is summarized as follows. Let $\Omega_{\partial M}M$ be the space of unparameterized curves in $M$ with endpoints on $\partial M$. We are interested in the subset $\Sigma_{\partial M} M\subset \Omega_{\partial M}M$ of curves without transverse self-intersections and the subset $\Sigma^0_{\partial M} M\subset \Sigma_{\partial M}M$ of point curves on $\partial M$. The space $(\Sigma_{\partial M} M,\Sigma^0_{\partial M} M)$ has a non-trivial homology class in dimensions one and two. We want to construct representatives for these homology classes whose images consist only of curves of bounded length. Following \cite{hass1994} and \cite{ko2023}, we can then apply a free boundary curve shortening flow to the curves in both images. A subsequence of curves in each image will converge to an orthogonal geodesic chord whose length is no longer than the longest curve in the image of the original representative. If the chords we obtain from the two representatives are equal, we will be able to use a standard Lyusternik--Schnirelmann argument to prove that there are in fact infinitely many simple orthogonal geodesic chords of the same length.
		\par 
		In order to construct these representatives, we will first form what we call a ``radial sweepout" of $M$ as per the following two definitions. 
		\begin{definition}
			\label{def:monotone}
			We call a family of curves (non-strictly) monotone if no curve in the family has transverse self-intersections and no two distinct curves in the family intersect transversely.
		\end{definition}
		\begin{definition}
			\label{def:sweepout}
			A radial sweepout of $M$ is a monotone family of curves $\Gamma_t$ parameterized by $S^1$ such that for all $t\in S^1$, $\Gamma_t(0)=p$ for some $p\in \operatorname{int} M$ and $\Gamma_t(1)\in\partial M$. Moreover, this map is homotopically non-trivial relative to $\partial M$.
		\end{definition}
		\noindent 
		An example of a radial sweepout of a 2-disk $M$ is depicted in Figure \ref{fig:example_sweepout}.
		\begin{figure}[ht]
			\centering
			\includegraphics[width=0.35\linewidth]{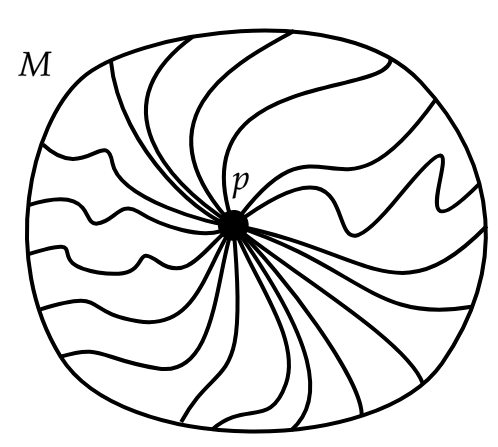}
			\caption{An example of a radial sweepout of a 2-disk.}
			\label{fig:example_sweepout}
		\end{figure}
		We construct this sweepout as follows. First, we will use a version of Berger's lemma (see Lemma \ref{lemma:berger})
		to divide $M$ into two or three regions bounded by pairs of minimizing geodesics that start at some $p$ and meet $\partial M$ orthogonally. We then apply the free boundary curve shortening flow to each pair of bounding geodesics. We would like each pair to contract to a point under this flow so that we can combine the three homotopies into a sweepout of $M$. If instead a subsequence of the resulting curves converges to an orthogonal geodesic chord, we can instead contract the pair to a point using the techniques of \cite{rotman_monotone_2017}, \cite{chambers_rotman_2013}, and \cite{lio_2015}. An orthogonal geodesic chord obtained by shortening can be no longer than the pair of minimizing geodesics and hence no longer than twice the diameter of $M$. In this sense, short simple orthogonal geodesic chords obstruct our ability to form a sweepout through short simple curves with endpoints on $\partial M$. 
		\par 
		After we have contracted all pairs of bounding geodesics, these contracting homotopies cover $M$ and can be combined into a radial sweepout. Roughly speaking, a curve in our radial sweepout is given by a curve in one of these homotopies followed by a short curve joining it to $p$. Our representative of the one-dimensional homology class will be this sweepout. The second representative will essentially be formed by taking all pairs of curves in the sweepout. We can then apply the Lyusternik--Schnirelmann proof to obtain two distinct simple orthogonal geodesic chords of bounded length.
		\par 
		Once the convex case is proven, we will be able to prove a weaker result in the non-convex case. Following the work of H. Seifert \cite{seifert_1949} and Gluck and Ziller \cite{gluck1984}, we affix a collar with convex boundary to $\partial M$. The resulting manifold satisfies the conditions of Theorem \ref{theorem:main} and thus admits a short orthogonal geodesic chord. Taking an arc of this chord that lies inside $M$, we obtain the following.
		\begin{theorem}
			\label{theorem:main_concave}
			Let $M$ be a Riemannian $2$-disk with diameter $d$ and boundary length $P$. Then $M$ admits a geodesic segment which lies in the interior of $M$ except at its endpoints, at least one of which meets $\partial M$ orthogonally. Moreover, this curve has length at most $4d+P$.
		\end{theorem}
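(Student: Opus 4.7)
The plan is to reduce to the convex case and apply Corollary \ref{cor:simple}. Following the sketch in the excerpt, for each small $\epsilon > 0$ I would attach a convex collar to $\partial M$ to form a 2-disk $\tilde M_\epsilon$ with convex boundary, obtain a short orthogonal chord on $\tilde M_\epsilon$ from Corollary \ref{cor:simple}, restrict it to a geodesic arc in $M$, and let $\epsilon \to 0$.

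Concretely, I would take the collar to be $[0, \epsilon] \times \partial M$ equipped with a warped product metric of the form $dr^2 + f_\epsilon(r, s)^2 ds^2$, where $s$ is an arc length parameter on $\partial M$. The function $f_\epsilon$ is chosen so that: $f_\epsilon(0, s) = 1$ with $\partial_r f_\epsilon(0, s)$ matching the geodesic curvature of $\partial M$ (so the glued metric is smooth), $\partial_r f_\epsilon(\epsilon, s) > 0$ (so the outer boundary is convex in $\tilde M_\epsilon$), and $f_\epsilon(\epsilon, s) \to 1$ uniformly as $\epsilon \to 0$. This Seifert--Gluck--Ziller style construction \cite{seifert_1949, gluck1984} produces a 2-disk $\tilde M_\epsilon$ with convex boundary satisfying $\diam \tilde M_\epsilon \leq d + 2\epsilon$ and boundary length at most $P + O(\epsilon)$. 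The crucial feature is that $\partial_r$ is a unit geodesic vector field on the collar, so any geodesic of $\tilde M_\epsilon$ orthogonal to $\partial \tilde M_\epsilon$ traverses the collar radially and crosses $\partial M$ orthogonally.

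Applying Corollary \ref{cor:simple} to $\tilde M_\epsilon$ produces a simple orthogonal geodesic chord $\gamma_\epsilon$ of length at most $4d + P + O(\epsilon)$. Parameterizing by arc length from one endpoint on $\partial \tilde M_\epsilon$, $\gamma_\epsilon$ crosses the collar radially in time $\epsilon$, meets $\partial M$ orthogonally at a point $p_\epsilon$, then continues into $\inte M$ until it first returns to $\partial M$ at some time $t_2 > \epsilon$. The restriction $\gamma'_\epsilon := \gamma_\epsilon|_{[\epsilon, t_2]}$ is a geodesic segment of length at most $4d + P + O(\epsilon)$, lies in $\inte M$ except at its endpoints, and meets $\partial M$ orthogonally at $p_\epsilon$. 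To attain the exact bound $4d + P$, I would take a sequence $\epsilon_n \to 0$ and extract a $C^1$-convergent subsequence of the arcs $\gamma'_{\epsilon_n}$ reparameterized by arc length. By Arzel\`a--Ascoli and continuous dependence of geodesics on initial data, the limit is a geodesic segment $\gamma'$ in $\overline{M}$ of length at most $4d + P$ with one endpoint orthogonal to $\partial M$. Non-degeneracy follows from a uniform positive lower bound on the lengths $L(\gamma'_{\epsilon_n})$: at every $p \in \partial M$ the inward orthogonal geodesic leaves $\partial M$ transversely and stays in $\inte M$ for at least a uniform time, by compactness of $\partial M$ together with continuous dependence on initial conditions. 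If the limit touches $\partial M$ at an interior point, I would truncate at the first such touch.

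The main obstacle is the collar construction: one must choose $f_\epsilon$ so that simultaneously the glued metric is smooth, the outer boundary is strictly convex, both the diameter and boundary length are controlled to $O(\epsilon)$, and orthogonality to $\partial \tilde M_\epsilon$ translates to orthogonality to $\partial M$ across the collar. This is essentially the classical Seifert--Gluck--Ziller technique combined with the warped product trick for the orthogonality; the verification is routine but needs to be carried out carefully in the present setting.
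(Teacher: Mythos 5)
Your proposal is correct and follows essentially the same path as the paper's proof: both attach a Seifert--Gluck--Ziller collar with a product-type metric in the normal direction, apply the convex-case bound, observe that orthogonality to the outer boundary forces the chord to cross $\partial M$ radially (hence orthogonally), restrict to the arc in $M$, and pass to the limit $\epsilon \to 0$ by compactness. The only presentational difference is that the paper constructs the collar by modifying the metric $g(x,t)\,dx^2 + dt^2$ on a Fermi tubular neighbourhood via a reparameterization $\phi(t)$, while you write the same thing as a warped product $dr^2 + f_\epsilon^2\,ds^2$.
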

		\noindent
		Note that this curve may not be an orthogonal geodesic chord, as it may not be orthogonal to $\partial M$ at both endpoints.
		\par 
		This article is organized as follows. We first recall the properties of the curve shortening flows we will be using in this paper in Section \ref{sec:flow}. We summarize the properties of the disk flow of \cite{hass1994} for closed curves (Theorem \ref{theorem:disk_flow}) and the free boundary disk flow for curves with endpoints on $\partial M$ (Theorem \ref{theorem:free_boundary_flow}). We then recall the properties of the free boundary flow on families of curves (Theorem \ref{theorem:family_shortening}). Lastly, we prove that convex curves (Definition \ref{def:convex}) behave well under these flows (Lemma \ref{lemma:shortening_monotonicity}).
		In Section \ref{sec:family}, we describe how to use curve shortening and a generalized version of Berger's Lemma (Lemma \ref{lemma:berger}) to construct a radial sweepout of $M$ through curves of bounded length in the absence of short orthogonal geodesic chords (Lemma \ref{lemma:main}). We also recall the result of \cite{lio_2015} which shows how to create such a sweepout even in the presence of short orthogonal geodesic chords, although with a much larger length bound (Theorem \ref{theorem:main_alt}).
		In Section \ref{sec:tighten}, we apply the standard Lyusternik--Schnirelmann argument to produce a short orthogonal geodesic chord from our radial sweepout (Lemma \ref{lemma:pull_tight}), using an intermediate lemma from \cite{hass1994} (Lemma \ref{lemma:shorten_or_nbhd}). We then prove Theorem \ref{theorem:main} by analyzing the cases of Lemma \ref{lemma:main} and, if necessary, applying Theorem \ref{theorem:main_alt}.
		Finally, in Section \ref{sec:concave}, we use our results for manifolds with convex bound to prove Theorem \ref{theorem:main_concave}.
		\\
		\ 
		\\
		\paragraph{\textbf{Acknowledgements.}} The author would like to thank Regina Rotman for her invaluable guidance. This work was supported in part by an NSERC Canada Graduate Scholarships Doctoral grant. This paper was partially written during the author's stay at the Summer Scholars program at the Institute for Advanced Study. It was largely completed while the author was in residence at the Simons Laufer Mathematical Sciences Institute (formerly MSRI) in Berkeley, California, during the Fall 2024 semester, supported by the National Science Foundation under Grant Number DMS-1928930.
		\section{Results}
		
		\subsection{The Disk Flow and the Free Boundary Disk Flow}
		\label{sec:flow}
		
		The first curve shortening process we will use is the disk flow of Hass and Scott \cite{hass1994}, which is defined on closed curves as follows. Suppose we wish to apply the disk flow to the closed curve $\gamma=\gamma_0$ to produce a family of curves $\gamma_t$ for $t\in[0,\infty)$. We first cover $M$ by totally normal metric balls $\{B_i\}_{i=1}^k$ such that the metric balls with half the radius and the same centres still cover $M$. The flow is defined in each $B_i$ consecutively, beginning with $B_1$. At step $i$, every arc of $\gamma\cap B_i$ with endpoints on $\partial B_i$ is replaced by a minimizing geodesic connecting its endpoints. Because $B_i$ is totally normal, this geodesic is unique and lies in $B_i$. If $\gamma$ lies entirely within $B_i$, we contract it to a point. After doing this in every ball, we obtain the curve $\gamma_1$. We define $\gamma_t$ for $t\in(0,1)$ as a homotopy between $\gamma_0$ and $\gamma_1$ through curves $\gamma_t$ that have no more transverse self-intersections than $\gamma_0$ has (see Lemma 1.6 of \cite{hass1994}). We iterate this process to define $\gamma_t$ for $t\in[i,i+1]$, $i\in\mathbb{N}$. 
		\par 
		We recall a few properties of the disk flow.
		\begin{theorem}[see Theorem 1.8 of \cite{hass1994}]
			\label{theorem:disk_flow}
			Let $\gamma_t$, $t\in[0,\infty)$, be the image of a closed curve $\gamma_0$ under the disk flow. Then the following hold.
			\begin{enumerate}
				\item 
				$L(\gamma_i)\leq L(\gamma_{i-1})$ for all $i\in\mathbb{N}$.
				\item 
				The number of transverse self-intersections of $\gamma_t$ is non-increasing as a function of $t$.
				\item 
				A subsequence $\{\gamma_{i_j}\}_{j\in\mathbb{N}}$ of the curves $\{\gamma_i\}_{i\in\mathbb{N}}$ converges to some $\gamma_\infty$, which is either a closed geodesic of length at most $\lim\limits_{j\to\infty} L(\gamma_{i_j})$ or a point curve.
				\item
				$0<L(\gamma_i)= L(\gamma_{i-1})$ if and only if $\gamma_i$ is a closed geodesic. \qed
			\end{enumerate}
		\end{theorem}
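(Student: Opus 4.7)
The plan is to establish (1) and (2) by local analysis inside each totally normal ball $B_i$, then to deduce (3) and (4) by a compactness argument combined with a rigidity statement characterizing when the length decrement vanishes.

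For (1), I would note that at step $i$, inside each ball $B_m$, every arc of $\gamma\cap B_m$ with endpoints on $\partial B_m$ is replaced by the unique minimizing geodesic between those endpoints, and any such replacement can only decrease length. Summing the contributions across the finitely many balls gives $L(\gamma_i)\leq L(\gamma_{i-1})$; if $\gamma$ happens to lie entirely inside some $B_m$ the claim is trivial since the curve collapses to a point. For (2), I would invoke Lemma 1.6 of \cite{hass1994}: within a totally normal ball the replacement of an arc by a geodesic can be realized by an intermediate homotopy (through a one-parameter family of curves in normal coordinates) that introduces no new transverse self-intersections. Iterating across the $k$ balls and across integer times propagates the count bound to every intermediate time.

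For (3), I would parameterize each $\gamma_i$ proportionally to arclength, making it $L(\gamma_i)$-Lipschitz, and therefore uniformly Lipschitz since $L(\gamma_i)\leq L(\gamma_0)$. Arzel\`a--Ascoli applied in the compact manifold $M$ yields a uniformly convergent subsequence $\gamma_{i_j}\to\gamma_\infty$, and lower semicontinuity of length gives $L(\gamma_\infty)\leq\lim_j L(\gamma_{i_j})$. If this limit is zero, $\gamma_\infty$ is a point curve. Otherwise I would argue $\gamma_\infty$ is a closed geodesic as follows: since $L(\gamma_i)$ is monotone nonincreasing and bounded below, the per-step decrement tends to zero, so for $j$ large each single-ball replacement changes the length by an arbitrarily small amount. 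This forces every arc of $\gamma_{i_j}\cap B_m$ to be arbitrarily close to a minimizing geodesic between its endpoints on $\partial B_m$, so in the limit every arc of $\gamma_\infty\cap B_m$ is itself minimizing. Because the half-radius balls $\tfrac12 B_m$ still cover $M$, every point of $\gamma_\infty$ lies in the interior of some $B_m$, so $\gamma_\infty$ is locally a geodesic everywhere and hence a closed geodesic.

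Property (4) is the one-step version of this rigidity. If $L(\gamma_i)=L(\gamma_{i-1})>0$ then no length was lost in any ball during step $i$, and unique minimization in each totally normal $B_m$ forces each arc of $\gamma_{i-1}\cap B_m$ to have already coincided with the minimizing geodesic between its endpoints. The half-radius covering again lets one conclude that $\gamma_{i-1}$ is locally a geodesic at every point; the converse direction is immediate. The main obstacle will be the convergence step in (3), specifically verifying that $\gamma_\infty$ is a genuine closed geodesic rather than merely piecewise geodesic with possible corners at ball boundaries; this is exactly where the choice of the half-radius refinement of the cover is essential, since it guarantees that the corners introduced at one step are interior points of some other $B_m$ and therefore get smoothed out by the next replacement.
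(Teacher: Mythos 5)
The paper does not prove this theorem: the \verb|\qed| at the end of the statement indicates that it is cited as a black box from Theorem~1.8 of \cite{hass1994}. So there is no paper proof to compare against. Your proposal is essentially a reconstruction of the standard Hass--Scott argument, and it is structurally correct: (1) and (2) follow from the local replacement rule and Lemma~1.6 of \cite{hass1994}; (3) follows from Arzel\`a--Ascoli plus the observation that the total length loss is finite, so per-step decrements vanish; and (4) is the rigidity case, with the half-radius refinement of the cover used to rule out corners at ball boundaries.

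One small point deserves more care in (4). The balls are processed consecutively within a single integer time step, so the curve seen by $B_m$ is not $\gamma_{i-1}$ but the output of the preceding $m-1$ ball operations. The correct phrasing is an induction: no length loss in $B_1$ forces the arcs of $\gamma_{i-1}\cap B_1$ to already be the minimizing geodesics, hence $B_1$ acts trivially; then no length loss in $B_2$ forces the same for $\gamma_{i-1}\cap B_2$; and so on. The conclusion that $\gamma_i=\gamma_{i-1}$ and that every arc of $\gamma_{i-1}\cap B_m$ is already minimizing then follows, after which the half-radius covering argument applies exactly as you describe. Similarly, in (3) the step from \emph{small length excess} of an arc to \emph{closeness to the minimizing geodesic} uses a compactness argument that is worth making explicit (uniqueness of minimizers in a totally normal ball plus a finite cover and a uniform bound $L(\gamma_i)\le L(\gamma_0)$ on the number of arcs), but this is standard. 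Neither issue is a fundamental gap.
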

		\par
		The second process we will consider is a free boundary version of the disk flow, defined on curves with endpoints on $\partial M$. This flow is described briefly in Section 5 of \cite{hass1994}, but we will detail it here for completeness.
		First, cover $M$ by totally normal metric balls as in the disk flow. However, we now also ensure that the metric balls in our cover are small enough that if $B_i\cap \partial M\not=\emptyset$, then any $x\in B_i$ has a unique minimizing geodesic in $B_i$ connecting it to $\partial M$ that meets $\partial M$ orthogonally. Let $\gamma_0$ be the curve that we wish to shorten. As before, we act in each ball consecutively. An arc of $\gamma_0\cap B_i$ that does not contain any endpoints of $\gamma_0$ is replaced by the corresponding minimizing geodesic, as in the disk flow. An arc $\eta$ of $\gamma_0\cap B_i$ containing an endpoint of $\gamma_0$ is replaced by the unique minimizing geodesic connecting the point $\eta\cap \partial B_i$ to the nearest point of $\partial M\cap\partial B_i$. After doing this in each ball, we obtain $\gamma_1$. We then define $\gamma_t$ for $t\in(0,1)$ to be a homotopy between $\gamma_0$ and $\gamma_1$ that does not increase the number of transverse self-intersections, as in the disk flow. This completes the definition of the free boundary curve shortening flow. An example of this process is shown in Figure \ref{fig:csf}.
		\begin{figure}[ht]
			\centering
			\includegraphics[width=0.8\textwidth]{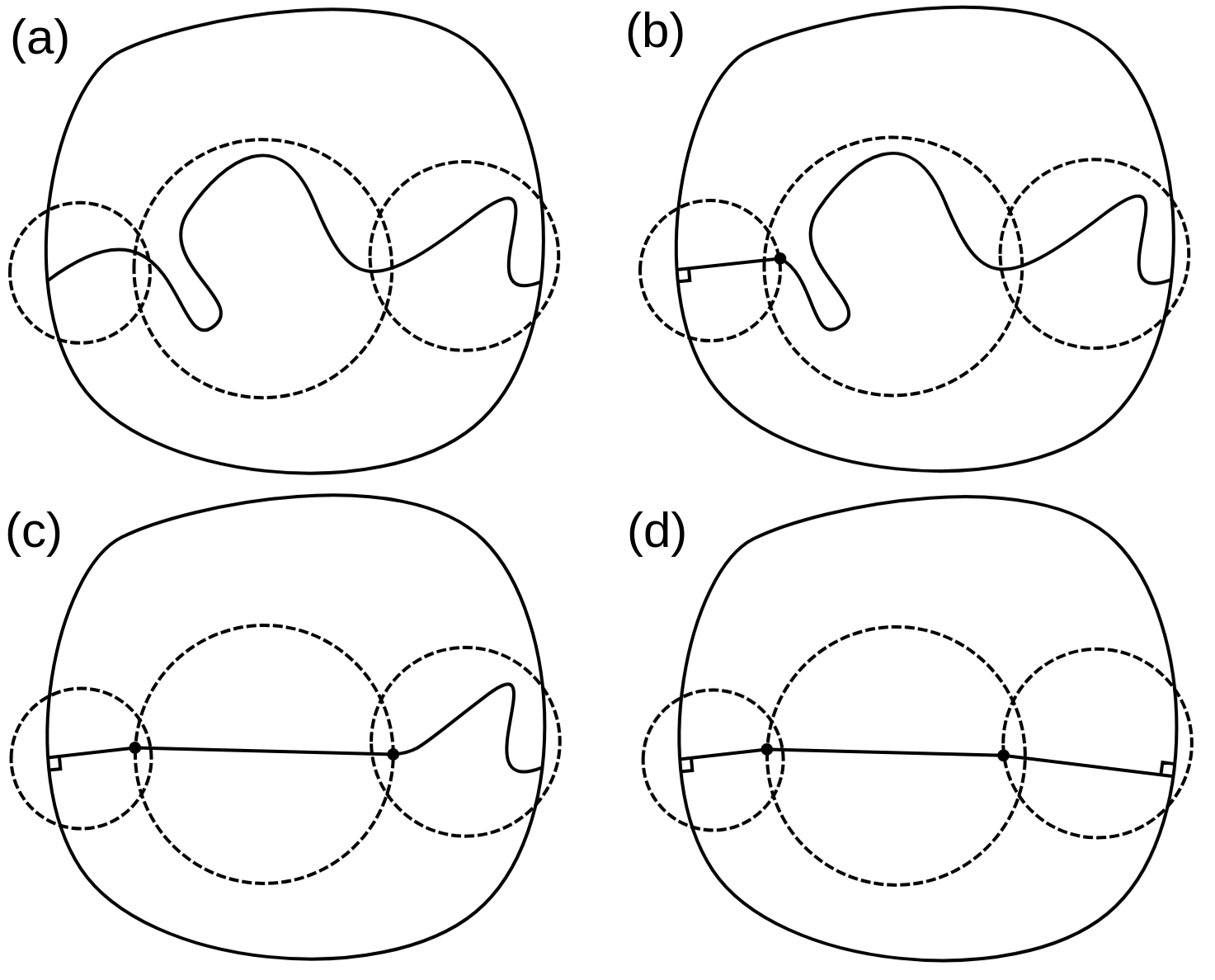}
			\caption{An illustrative example of the free boundary disk flow. (a) The initial curve covered by totally normal metric balls. (b) Straightening the arc in the first disk. (c) Straightening the arc in the second disk. (d) Straightening the arc in the third and final disk.}
			\label{fig:csf}
		\end{figure}
		\par 
		In analogy with Theorem \ref{theorem:disk_flow}, the free boundary disk flow has the following properties. The proof of this theorem is almost identical to that of Theorem 1.8 of \cite{hass1994}.
		\begin{theorem}
			\label{theorem:free_boundary_flow}
			Let $\gamma_0$ be a curve in $M$ with endpoints on $\partial M$. Let $\gamma_t$, $t\in[0,\infty)$, be the image of $\gamma_0$ under the free boundary disk flow. Then the following hold.
			\begin{enumerate}
				\item 
				$L(\gamma_i)\leq L(\gamma_{i-1})$ for all $i\in\mathbb{N}$.
				\item 
				The number of transverse self-intersections of $\gamma_t$ is non-increasing as function of $t$.
				\item         
				A subsequence $\{\gamma_{i_j}\}_{j\in\mathbb{N}}$ of the curves $\{\gamma_i\}_{i\in\mathbb{N}}$ converges to some $\gamma_\infty$, which is either an orthogonal geodesic chord of length at most $\lim\limits_{j\to\infty} L(\gamma_{i_j})$ or a point curve.
				\item
				$0<L(\gamma_i)= L(\gamma_{i-1})$ if and only if $\gamma_i$ is an orthogonal geodesic chord. \qed
			\end{enumerate}
		\end{theorem}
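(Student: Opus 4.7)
The plan is to adapt Hass and Scott's proof of Theorem 1.8 in \cite{hass1994} to the free boundary setting, the only new ingredient being the analysis of arcs meeting $\partial M$, which is controlled by the extra restriction placed on the covering balls $\{B_i\}$.

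For (1) and (4), I would work within each totally normal ball $B_i$. Interior arcs are replaced by the unique minimizing geodesic between their endpoints, which is length non-increasing with equality only if the arc is already such a geodesic. For a boundary arc $\eta$ with one endpoint on $\partial M$, the replacement is the unique minimizing geodesic in $B_i$ from $\eta \cap \partial B_i$ to $\partial M$, which by construction of $B_i$ exists, is unique, and meets $\partial M$ orthogonally. Since $\eta$ is itself a path in $B_i$ from $\eta \cap \partial B_i$ to $\partial M$, the replacement cannot be longer, with equality only if $\eta$ already equals this minimizing orthogonal geodesic. Summing over all balls and arcs yields (1). Equality $L(\gamma_i) = L(\gamma_{i-1}) > 0$ forces every arc inside every $B_i$ to already be such a minimizing geodesic, so $\gamma_i$ is a piecewise geodesic curve whose pieces agree smoothly at shared interior endpoints (by uniqueness of minimizers in totally normal balls) and which meets $\partial M$ orthogonally at both endpoints; this establishes (4).

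For (2), within each $B_i$ I would interpolate between $\gamma_{i-1} \cap B_i$ and its straightened image by a homotopy that does not create transverse self-intersections, exactly as in Lemma 1.6 of \cite{hass1994}. The boundary case requires no new argument, since the replacement geodesic still lies inside the convex ball $B_i$, so any putative new self-intersection would be internal to $B_i$, which is ruled out by the uniqueness of minimizing geodesics there.

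For (3), the non-increasing sequence $L(\gamma_i)$ converges to some $L_\infty \geq 0$. Reparameterizing each $\gamma_i$ proportional to arc length gives a uniformly Lipschitz family, so Arzel\`a--Ascoli extracts a subsequence $\gamma_{i_j} \to \gamma_\infty$ uniformly, with $L(\gamma_\infty) \leq L_\infty$ by lower semicontinuity. If $L_\infty = 0$ then $\gamma_\infty$ is a point curve. The main obstacle is the case $L_\infty > 0$: one must rule out the possibility that $\gamma_\infty$ is merely a limit of nearly stationary curves rather than a bona fide orthogonal geodesic chord. Following the argument in \cite{hass1994}, I would pass to a further subsequence along which the combinatorial pattern of intersections of $\gamma_{i_j}$ with the fixed cover $\{B_k\}$ stabilizes. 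Since $L(\gamma_{i_j}) - L(\gamma_{i_j + 1}) \to 0$ and each flow step depends continuously on the initial curve, the flow leaves $\gamma_\infty$ invariant, so by (4) the limit $\gamma_\infty$ is an orthogonal geodesic chord.
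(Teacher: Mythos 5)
Your proposal takes exactly the route the paper indicates: the paper gives no detailed proof of this theorem and simply asserts that the argument is ``almost identical to that of Theorem 1.8 of \cite{hass1994},'' and you reconstruct that adaptation correctly, with the one genuinely new point being the treatment of arcs containing an endpoint of $\gamma_0$ (replaced by the unique orthogonal minimizing geodesic in $B_i$ from $\eta\cap\partial B_i$ to $\partial M$, which is never longer than $\eta$, with equality iff $\eta$ is already that geodesic). Your sketch of (3) glosses over the fact that after one step the curves lie in a finite-dimensional space of piecewise geodesics, which is what upgrades lower semicontinuity and lets the limit inherit the fixed-point property under the flow, but this is the same level of detail Hass--Scott leave implicit and is not a gap in the intended argument.
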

		\noindent  
		Note that if $\gamma_0$ has no transverse self-intersections, then by the above $\gamma_\infty$ is either simple or a point curve.
		\par
		We will also use the fact that the free boundary disk flow can be extended continuously to families of curves (see Section 3 of \cite{hass1994}). We will make use of this result in order to apply a Lyusternik--Schnirelmann style argument in Section \ref{sec:tighten}.
		\begin{theorem}
			\label{theorem:family_shortening}
			Let $f:X\to \Omega_{\partial M}M$ be a continuous family of curves parameterized by a compact manifold $X$. There exists a family of maps $f_t:X\to \Omega_{\partial M}M$, $t\in[0,\infty)$, called the image of $f$ under the free boundary disk flow, with the following properties. 
			\begin{enumerate}
				\item 
				The maximum length of a curve in the image of $f_t$ is non-increasing as a function of $t$.
				\item 
				The maximum number of transverse self-intersections of a curve in the image of $f_t$ is non-increasing as a function of $t$.
				\item 
				There is a subsequence of curves $\gamma_t$ in the image of $f_t$ that converges to either an orthogonal geodesic chord or a point curve as $t\to\infty$. Moreover, if any $\gamma_t$ has no transverse self-intersections, then the limit curve has no transverse self-intersections.
				\item 
				Every $f_t$ is homotopic to $f$.\qed
			\end{enumerate}
		\end{theorem}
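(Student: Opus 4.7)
The plan is to build the family flow $f_t$ by applying the single-curve construction of Theorem \ref{theorem:free_boundary_flow} simultaneously in the family parameter, following the blueprint of Section 3 of \cite{hass1994}. First I would fix a finite cover of $M$ by totally normal balls $\{B_i\}_{i=1}^k$ satisfying the extra boundary-orthogonality condition used to define the free boundary disk flow, and small enough that the balls of half the radius also cover $M$. Because $X$ is compact and each curve $f(x)$ meets only finitely many balls, a uniform such cover works for every $x \in X$ at once. The construction then proceeds one ball at a time: for each $i$, I would replace every arc of $(f_t(x)) \cap B_i$ (with or without an endpoint of the curve) by the geodesic straightening dictated by the single-curve flow, linearly interpolating between the original arc and its straightening via the exponential map at the center of $B_i$. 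This produces a family $f_{t+1/k}(x)$ depending continuously on both $t$ and $x$.

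The crucial issue is that as $x$ varies, arcs of $f(x) \cap B_i$ can merge, split, or come into and out of existence via tangencies with $\partial B_i$. To handle this, I would use the observation that when an arc shrinks to a tangential point, its geodesic replacement also shrinks to the same point, so the straightening operation extends continuously across the tangency. More formally, one can phrase the ball operation as a continuous self-map of the space of curves intersecting $B_i$ transversely (or tangentially with a vanishing arc), and these compose continuously across $i = 1, \dots, k$. A partition-of-unity refinement on $X$, subordinate to a stratification of the possible intersection patterns, can be used as in \cite{hass1994} to smooth out any remaining discontinuities in how we parameterize the arcs.

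Once the ball-by-ball operation is continuous in $(t, x)$, iterating $k$ operations yields a continuous homotopy from $f$ to $f_1$, and iterating over $i \in \bN$ yields the desired $f_t$ for $t \in [0,\infty)$. The four properties then follow readily from their single-curve counterparts: for each fixed $x \in X$, the curves $f_t(x)$ are precisely the image of $f(x)$ under the single-curve flow, so items (1) and (2) follow from Theorem \ref{theorem:free_boundary_flow}(1) and (2) applied pointwise and then maximized over the compact $X$. Item (3) is an application of Theorem \ref{theorem:free_boundary_flow}(3) to any individual $x$, with the preservation of simplicity under limits following from (2) together with the fact that a $C^1$ limit of curves without transverse self-intersections has no transverse self-intersections (which is how Theorem \ref{theorem:free_boundary_flow}(3) was proved in the single-curve case). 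Item (4) is built in: the family $\{f_t\}_{t \in [0,T]}$ is, by construction, a continuous homotopy from $f$ to $f_T$ in $\Omega_{\partial M} M$.

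The hard part is paragraph two, the continuity of the parameterized ball straightening across changes in the combinatorial intersection pattern with the $B_i$'s. This is precisely the technical content handled in Section 3 of \cite{hass1994} for closed curves; adapting it here requires only modest additional care for arcs containing an endpoint of $f(x)$ on $\partial M$, since the ball operation at such arcs is governed by the unique minimizing geodesic to $\partial M \cap \partial B_i$, which depends continuously on the endpoint data provided the balls were chosen small enough (as in the definition of the free boundary disk flow). Once this continuity is established, every other assertion in Theorem \ref{theorem:family_shortening} reduces to the single-curve theorem.
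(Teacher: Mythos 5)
Your approach matches the paper's: the paper offers no proof here at all, simply citing Section 3 of \cite{hass1994} and stamping the theorem with a \qed, and your proposal is precisely an outline of how that family construction adapts to the free boundary setting (uniform cover of totally normal balls, ball-by-ball straightening done simultaneously in the family parameter, continuity across changing arc combinatorics, and then the four properties inherited from the single-curve Theorem~\ref{theorem:free_boundary_flow}). Since you correctly flag the continuity-across-tangencies issue as the technical crux and defer it to the same source the paper defers to, your proposal is on the same footing as the paper's implicit proof; the one statement I would soften is ``for each fixed $x$ the curves $f_t(x)$ are \emph{precisely} the image of $f(x)$ under the single-curve flow,'' since the family construction in \cite{hass1994} introduces interpolations to maintain continuity that can make the family flow differ slightly from the literal pointwise single-curve flow, although properties (1)--(4) survive either way.
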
 
		\noindent
		As an alternative, one can use the smooth free boundary flows studied by Ko in \cite{ko2023}. These are flows on compact Riemannian surfaces with convex boundary that are defined (for instance) on families of simple curves with endpoints on the boundary. Under these flows, a simple curve either converges to a simple orthogonal geodesic chord or vanishes in finite time. Moreover, like the disk flows, the flows studied by Ko satisfy the properties required to apply a Lyusternik--Schnirelmann style argument in Section \ref{sec:tighten} (see Section 5 of \cite{ko2023}).
		\par
		To ensure that we obtain a homotopically non-trivial family of curves, we will need some control over the curve shortening processes. We will do so by exploiting convexity. For convenience, we make the following definitions.
		\begin{definition}
			\label{def:convex}
			A closed subset $X\subset M$ is called convex if there exists $\epsilon>0$ so that any pair of points $x,y\in \partial X$ closer than $\epsilon$ are connected by a unique minimizing geodesic which lies in $X$.
			\par
			An oriented simple closed curve is called convex if the closed region it bounds (according to its orientation) is convex.
			\par
			A closed subset $X\subset M$ is called free boundary convex if it is convex in the above sense and there exists $\epsilon>0$ so that any point $x\in X$ closer than $\epsilon$ to $\partial M$ is connected to $\partial M$ by a unique minimizing geodesic which lies in $X$.
			\par 
			An oriented simple curve with endpoints on $\partial M$ is called free boundary convex if the closed region it co-bounds with an arc of $\partial M$ (according to its orientation) is free boundary convex.
		\end{definition} 
		\noindent
		A piecewise-geodesic curve whose interior angles are all at most $\pi$ is convex. If every interior angle this curve makes with $\partial M$ is at most $\pi/2$, then it is free boundary convex. The disk flow and the free boundary disk flow both have convenient properties when applied to a (free boundary) convex initial curve. In such a case, a suitable homotopy between $\gamma_{0}$ and $\gamma_{1}$ (and hence any $\gamma_{i-1}$ and $\gamma_i$) is easily described as follows. Suppose an arc $\eta_0$ of $\gamma_0\cap B_i$ not containing an endpoint of $\gamma_0$ is to be homotoped to an arc $\eta_1$ of $\gamma_1\cap B_i$. For convenience, parameterize $\eta_0$ by the unit interval, and for $t\in[0,1/2]$ define $\rho_t$ to be the minimizing geodesic connecting $\eta_0(1/2-t)$ and $\eta_0(1/2+t)$, noting that $\rho_{1/2}=\eta_1$. 
		Our homotopy will be
		\begin{align*}
			\eta_t= \eta_0|_{[0,1/2-t]}*\rho_{t}*\eta_0|_{[1/2+t,1]}
		\end{align*}
		for $t\in[0,1/2]$. 
		\par
		If instead $\eta_0$ contains an endpoint of $\gamma_0$, we homotope $\eta_0$ to $\eta_1$ through arcs $\eta_t$ defined as follows. For convenience, orient $\eta$ so that $\eta(0)\in\partial B_i$ and $\eta(1)\in\partial M\cap B_i$. The arc $\eta_1$ is the minimizing geodesic connecting $\eta(0)$ to the unique closest point $z$ on $\partial M\cap B_i$, and hence is the unique minimizing geodesic in $B_i$ that connects $\eta(0)$ to $\partial M$ and meets $\partial M$ orthogonally. We will define $\eta_t$ as a minimizing geodesic connecting $\eta(0)$ to a suitable point on $\partial M\cap B_i$ as follows. By convexity, $\eta_0$ meets $\partial M$ at angle at most $\pi/2$. Therefore, by the first variation of energy formula $\eta_0$ is not lengthened by moving $\eta_0(1)$ along $\partial M$ toward $z$. We define $\eta_t$ by moving $\eta_0(1)$ inward until it coincides with $z$, increasing the angle at $\eta_t(1)$ until it equals $\pi/2$. Lastly, we define $\gamma_t$ consecutively in each $B_i$ by applying these homotopies to every arc of $\gamma\cap B_i$. 
		\par
		With this more explicit definition of the flow in mind, we have the following useful result, which shows that convex curves stay convex under the (free boundary) disk flow.
		
		\begin{lemma}
			\label{lemma:shortening_monotonicity}
			Let $\gamma_0$ be a piecewise geodesic (free boundary) convex curve. Let $\gamma_t$ be its image under the (free boundary) disk flow. Then $\gamma_t$ is (free boundary) convex for every $t$, and any two curves $\gamma_s$ and $\gamma_t$, $s\not=t$, do not intersect transversely. If additionally $\gamma_{t_0}$ is a point curve for some $t_0$, but $\gamma_0$ is not, then $\gamma_t$ induces a map of the disk to itself of non-zero local degree.
		\end{lemma}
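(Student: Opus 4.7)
The plan is to verify all three conclusions by analyzing a single arc-straightening step of the flow inside a single totally normal ball $B_i$, then iterating over balls and time steps. Within $B_i$ the flow replaces a sub-arc $\eta_0\subset\gamma\cap B_i$ with its straightened version $\eta_1$ (the minimizing chord between its $\partial B_i$-endpoints, or for an endpoint arc the perpendicular foot on $\partial M$), and the intermediate arcs $\eta_t$ are given by the explicit chord-sweeping construction recalled just above the lemma statement.

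For convexity, I would let $D_0$ denote the closed region bounded by $\gamma_0$. By the convexity hypothesis, each minimizing chord between two nearby points on $\eta_0$ lies in $D_0$, so the entire chord-sweep stays in $D_0\cap B_i$ and each region $D_t$ bounded by $\gamma_t$ is nested in $D_0$. Since $\gamma_1$ is piecewise geodesic, I only need to check the two corners on $\partial B_i$ where $\eta_1$ meets the rest of $\gamma_0$. Because $\eta_1$ lies inside $D_0$, its tangent at each such corner points into $D_0$ farther than the tangent of $\eta_0$ did, so the interior angle there can only decrease and remains $\leq \pi$. In the free boundary case, $\eta_1$ is orthogonal to $\partial M$ by construction, so the corner angle at $\partial M$ equals $\pi/2$. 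Choosing $\epsilon$ uniformly in terms of the fixed cover then verifies convexity of $\gamma_1$, and induction over balls and time steps extends this to every $\gamma_t$.

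The non-transversality claim would follow directly from the chord-sweep description: each intermediate arc $\eta_t$ lies in the closed lune bounded by $\eta_0$ together with the chord $\rho_t$, and this lune shrinks monotonically to $\eta_1$ as $t$ grows, so $\gamma_s$ and $\gamma_t$ for $s<t$ share only common unchanged portions of $\gamma$ and never cross transversely. For the degree claim, the concatenated homotopy $\gamma_t\colon [0,t_0]\times[0,1]\to M$ collapses to a point at $t=t_0$, hence factors through a disk. Each chord-sweep traverses its corresponding lune exactly once with a consistent orientation, and the union of all lunes used up to time $t_0$ exhausts $D_0$ up to a set of measure zero, so a regular-value count at a typical interior point of $D_0$ yields local degree $\pm 1$.

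The hardest step will be the corner analysis in the convexity argument: one must show that the $\epsilon$ witnessing convexity in Definition \ref{def:convex} can be chosen uniformly in $t$, rather than degenerating as $\gamma_t$ evolves. This should follow from the fact that the cover $\{B_i\}$ is fixed with a positive uniform lower bound on radii, so the local convexity estimates depend on the cover rather than the shrinking curve; but in the free-boundary case one must also track the boundary corner angles carefully, since successive flow steps near $\partial M$ could in principle create a sharp angle with $\partial M$ that must nonetheless be kept $\leq \pi/2$.
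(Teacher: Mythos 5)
Your overall strategy — nest the regions $D_t$ via the explicit chord-sweep homotopy, check corner angles for convexity, and then count preimages of a generic point to get nonzero degree — matches the paper's proof. However, there are two gaps worth flagging.

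First, you invoke ``the region $D_t$ bounded by $\gamma_t$'' without first proving that $\gamma_t$ remains simple. This is not automatic: within a single ball $B_i$ the flow straightens \emph{every} arc of $\gamma\cap B_i$, and one must argue that two new minimizing chords cannot cross each other, that a new chord cannot cross a yet-unprocessed arc, and (in the free-boundary case) that the two endpoint arcs sliding along $\partial M$ do not collide with each other or with an interior chord. The paper devotes its first paragraph to exactly this, using the facts that two minimizing geodesics in a totally normal ball cross at most once and transversely, and that the endpoints of arcs of $\gamma_0\cap B_i$ do not interleave on $\partial B_i$. Without this, ``$D_t$'' is not well-defined and the nesting argument you rely on cannot get started.

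Second, in the degree argument you assert that the lunes ``exhaust $D_0$ up to a set of measure zero,'' but the relevant issue is not coverage, it is \emph{multiplicity}: a priori a point could lie in several lunes (from different balls or different time steps), and then the regular-value count would not obviously give $\pm 1$. The paper handles this with a specific observation: if $x\in\gamma_s\cap\gamma_t$ for $s<t$, then by the structure of the flow $x$ must already lie on the integer-time curve $\gamma_{\lfloor s\rfloor}$, so the set of points with more than one preimage is contained in the countable union $\bigcup_i \gamma_i$, which has measure zero. You would need some replacement for this step; the lune picture alone does not rule out a point being swept more than once.

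The corner-angle reasoning for convexity (new chord lies in $D_0$, so the interior angle at the junction can only decrease, and the free-boundary angle is kept $\leq\pi/2$ by construction of the endpoint homotopy) is sound and matches the paper. Your worry in the final paragraph about $\epsilon$ degenerating in Definition~\ref{def:convex} is actually not an issue in the paper's treatment, since for a piecewise-geodesic curve convexity is verified purely by the angle conditions rather than by tracking an $\epsilon$ directly.
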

		\begin{proof}
			We will prove all claims except the last one for $t\in[0,1]$. By induction, the claims will then hold for all $t$. 
			\par
			We first claim that $\gamma_t$ is simple. A pair of minimizing geodesics can intersect at most once and must do so transversely. Therefore, since $\gamma_0$ is simple and hence has no transverse self-intersections, any pair of arcs of $\gamma_0\cap B_i$ that do not contain endpoints of $\gamma_0$ are replaced by a pair of minimizing geodesics that do not intersect. 
			For the same reason, a minimizing geodesic connecting two points of $\gamma_0\cap\partial B_i$ cannot cross a minimizing geodesic connecting a point of $\gamma_0\cap\partial B_i$ to a point on $\partial M$. Moreover, a pair of  minimizing geodesics connecting two points of $\gamma_0\cap\partial B_i$ to their respective nearest points on $\partial M$ cannot cross. Therefore, the endpoint arcs of our homotopy $\gamma_t$, $t\in[0,1]$, never cross each other.
			Thus $\gamma_t$ is simple for $t\in[0,1]$.
			\par 
			We now prove that $\gamma_t$ is convex. Since $\gamma_t$ is simple, we can define $D_t$ as the closed disk bounded by $\gamma_t$ and, if $\gamma_t$ has endpoints on $\partial M$, a portion of $\partial M$. Since $\gamma_t$ is piecewise geodesic, to prove convexity at time $t$ it is sufficient to consider the vertex angles of each $\gamma_t$. Every interior angle needs to be at most $\pi$, and any angles with $\partial M$ need to be at most $\pi/2$. By assumption, this is true of $\gamma_0$. Consider an arc $\eta$ of $\gamma_0\cap B_i$ that does not contain an endpoint of $\gamma_0$. For every $t\in[0,1]$, the corresponding arc of $\gamma_t$ is given by replacing a piece of $\eta$ by a minimizing geodesic connecting its endpoints. Because $\gamma_0$ is convex and each metric ball $B_i$ is totally normal, these minimizing geodesics will lie within $D_0$ and have endpoints on $\partial D_0$. Therefore any new interior angles formed are again at most $\pi$. If we are applying the free boundary disk flow, we must also consider any arc $\eta$ of $\gamma_0$ containing an endpoint of $\gamma_0$. However, our definition of $\gamma_t$ for $t\in[0,1]$ ensures that the angles at the endpoints are always at most $\pi/2$. Thus $\gamma_t$ is (free boundary) convex.
			\par 
			We next claim that $D_{t_2}\subseteq D_{t_1}$ for any $0\leq t_1<t_2\leq 1$, ensuring that $\gamma_{t_1}$ and $\gamma_{t_2}$ do not intersect transversely. By the definition of $\gamma_t$, endpoint-containing arcs move inside of $D_{t}$ at time $t$. Interior arcs of $\gamma_{t_2}$ are obtained by replacing segments of $\gamma_{t_1}$ by minimizing geodesics, which lie in $D_{t_1}$ by convexity. Thus $\gamma_{t_2}$ must lie in $D_{t_1}$, so $D_{t_2}\subseteq D_{t_1}$.
			\par 
			Finally, we prove that if some $\gamma_{t_0}$ is a point and $\gamma_0$ is not, then $\gamma_t$ induces a map of the Euclidean unit disk $D$ to itself of non-zero local degree. Assume that $t_0$ is the first time when $\gamma_t$ is a point. We can define a continuous map $f:D\to D$ by sending the circle of radius $t$, denoted $S(t)$, to $\gamma_{(1-t)t_0}$ for $t\in[0,1]$. We will show that a generic point of $D$ has a single preimage under $f$. Each $\gamma_t$ is simple, so any point in the image of $f|_{S(t)}$ has exactly one preimage in $S(t)$. Suppose some point $x$ lies on both $\gamma_t$ and $\gamma_s$ for $s<t$. Let $\eta^i_s$ be the arc of $\gamma_s\cap B_i$ containing $x$. By the definition of our flows, $x$ can lie on $\gamma_s\cap\gamma_t$ only in two cases.
			In the first case, $\eta^i_s$ is a minimizing geodesic that contains no endpoints of $\gamma_s$.
			In the second case, $\eta_s^i$ contains an endpoint of $\gamma_s$ and is a minimizing geodesic connecting $\eta^i_s\cap\partial B_i$ to the nearest point on $\partial M$. Either way, $x$ lies in the image of $\gamma_{\lfloor s\rfloor}$ by the definition of the free boundary disk flow, since such an arc is either created at an integer time or is fixed by the disk flow in $B_i$.
			Therefore, all such $x$ lie in the union of the images of the curves $\gamma_i$ over all non-negative integers $i$, which is a countable union of closed measure zero subsets of $D$ as claimed.
		\end{proof}
		\noindent
		Note that this lemma shows that a convex curve must flow ``inward"-- in particular, a simple curve bounding a convex region cannot flow outside that region. This will allow us some control over the behaviour of curves under the two flows. Moreover, the fact that simple curves remain simple under the flow will allow us to obtain simple orthogonal geodesic chords in the limit.
		
		\subsection{A Radial Sweepout of $M$}
		\label{sec:family}
		
		We will now construct a radial sweepout $\Gamma_t$.
		We first divide $M$ into free boundary convex disks $\Omega_i$ bounded by pairs of minimizing geodesics using the following variation of Berger's lemma, which is proven in a manner similar to the standard version (see Lemma 4.1 in Chapter 13 Section 4 of \cite{docarmo}). 
		\begin{lemma}
			\label{lemma:berger}
			Let $x$ be a point in $M$ at maximum distance from $\partial M$. Then given any $v\in T_xM$, there exists a minimizing geodesic segment $\gamma$ starting at $x$ and ending on $\partial M$ so that $\langle v,\gamma'(0) \rangle \geq 0$. Moreover, $\gamma$ meets $\partial M$ orthogonally. \qed
		\end{lemma}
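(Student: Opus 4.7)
The plan is to adapt the classical proof of Berger's lemma (do Carmo, Ch.~13, Lemma~4.1), the only new ingredient being the orthogonality assertion at $\partial M$. Set $r := d(x, \partial M)$ and let $Y \subset \partial M$ be the set of points realizing this distance; $Y$ is nonempty and compact. Let $\Lambda \subset T_xM$ be the set of unit initial vectors of minimizing geodesics from $x$ to points of $Y$; a standard Arzel\`a--Ascoli argument shows $\Lambda$ is compact. The orthogonality claim is immediate from the first variation of arc length at the endpoint: if $\gamma:[0,\ell]\to M$ is a minimizing geodesic from $x$ to $y \in Y$ and some $Z \in T_y\partial M$ had $\langle \gamma'(\ell), Z\rangle \neq 0$, then pushing $\gamma(\ell)$ along $\partial M$ in the $\pm Z$ direction would produce a nearby curve from $x$ to $\partial M$ of length strictly less than $r$, contradicting $y \in Y$. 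Hence $\gamma'(\ell)\perp T_y\partial M$.

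What remains is the angle condition: for every $v \in T_xM$, some $w \in \Lambda$ satisfies $\langle v, w\rangle \geq 0$. Suppose not; then by compactness of $\Lambda$ one has $\langle v, w\rangle \leq -\alpha$ uniformly on $\Lambda$ for some $\alpha > 0$. Setting $p_s := \exp_x(sv)$, the goal is to show $d(p_s, \partial M) > r$ for all sufficiently small $s > 0$, contradicting the maximality of $d(x, \partial M)$. I would partition $\partial M = A \sqcup B$ where $B$ is a small closed neighborhood of $Y$ in $\partial M$. On $A$, compactness gives $d(x, y) \geq r + \delta$ for some $\delta > 0$, so the triangle inequality yields $d(p_s, y) \geq r + \delta - s > r$ whenever $s < \delta$. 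On $B$, for any choice of minimizing unit vector $w_y$ from $x$ to $y$, a uniform Riemannian cosine-law estimate (on a compact coordinate neighborhood of $x$ on which sectional curvature is bounded) yields
\[
d(p_s, y) \geq d(x, y) - s\langle v, w_y\rangle - Cs^2,
\]
with $C$ independent of $y \in B$. Upper semi-continuity of the minimizing-geodesic correspondence, together with compactness of $Y$, lets me shrink $B$ so that $\langle v, w_y\rangle \leq -\alpha/2$ uniformly on $B$, giving $d(p_s, y) \geq r + s\alpha/2 - Cs^2 > r$ for small $s$. Combining the two regions produces the desired contradiction.

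The main subtlety is the uniform estimate on $B$: since minimizing geodesics from $x$ to a given point of $\partial M$ need not be unique and the set of their initial vectors depends only upper semi-continuously on the target, one must be careful to choose $B$ small enough that every such initial vector remains within a small neighborhood of $\Lambda$. This follows from a standard compactness and convergence argument (any limit of minimizing geodesics from $x$ to points accumulating on $Y$ is a minimizing geodesic from $x$ to a point of $Y$), after which the cosine-law step goes through uniformly.
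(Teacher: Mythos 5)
Your proof is correct and takes the same route the paper indicates: the paper omits the proof, citing only do Carmo's Lemma 4.1 (Chapter 13, Section 4) as the template, and your argument is precisely a careful adaptation of that proof, handling orthogonality by first variation at the boundary endpoint and the angle condition by the usual compactness-plus-hinge-comparison contradiction. The ``Riemannian cosine-law'' step is the hinge comparison under an upper sectional-curvature bound, which indeed yields the lower bound on $d(p_s,y)$ uniformly on $B$ since $d(x,y)$ is bounded away from zero on $\partial M$.
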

		
		\noindent
		We pick a minimal number of these geodesics so that any vector is within $\pi$ of one of our chosen geodesics. 
		We will then contract each pair of these geodesics through curves in $\Sigma_{\partial M} M$. For the most part, we will be able to do so using the free boundary disk flow. However, this flow can get stuck on a simple orthogonal geodesic chord instead of contracting to a point. Because we are interested in producing a \textit{pair} of chords, we will need to apply an additional technique. Therefore, we will try to contract $\partial \Omega_i$ with the standard disk flow. Using the following lemma due to Liokumovich, Nabutovsky and Rotman in \cite{rotman_ls_2017}, we can convert the resulting free homotopy into a homotopy through curves in $\Sigma_{\partial M} M$.
		
		\begin{lemma}[Lemma 3.1 in \cite{rotman_ls_2017}]
			\label{lemma:maeda_alt}
			Suppose $\gamma_0$ is a convex curve that can be contracted to a point $\gamma_1$ through a monotone free homotopy $\gamma_t$ of closed convex curves of length at most $l$. Choose some $p\in \gamma_0$ closest to $\gamma_1$. Then for any $\epsilon>0$, $\gamma_0$ can be homotoped to $p$ through a monotone family of loops based at $p$ of length at most $l+2d(p,\gamma_1)+\epsilon$. Moreover, this new homotopy has non-zero local degree as a map from the disk to itself.\qed
		\end{lemma}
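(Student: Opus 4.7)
The plan is to convert the free homotopy into a based one by attaching a short geodesic tether from $p$ to $\gamma_1$: at each time $t$ the new loop will run along a portion of the tether, traverse $\gamma_t$, and return. Let $\sigma:[0,L]\to M$ be a minimizing geodesic from $p$ to $\gamma_1$, so $L=d(p,\gamma_1)$. After an arbitrarily small perturbation, I may assume $\sigma$ meets each $\gamma_t$ transversely. Since the curves $\gamma_t$ are nested and contract to $\gamma_1$, the geodesic $\sigma$ crosses every $\gamma_t$, and I may pick a continuous selection $q_t=\sigma(\tau_t)$ of intersection points with $q_0=p$ and $q_1=\gamma_1$. Writing $\sigma_t=\sigma|_{[0,\tau_t]}$ and reparameterizing $\gamma_t$ to begin and end at $q_t$, I define the based loop
\[
\alpha_t \;=\; \sigma_t \cdot \gamma_t \cdot \sigma_t^{-1}.
\]
Then $L(\alpha_t)\le L(\gamma_t)+2|\sigma_t|\le l+2d(p,\gamma_1)$, $\alpha_0=\gamma_0$, and $\alpha_1$ is a back-and-forth traversal of $\sigma$, which contracts to $p$ inside a tubular neighborhood at arbitrarily small length cost.

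The core of the argument is verifying that $\{\alpha_t\}$ is a monotone family, at a total length cost absorbed into the $\epsilon$ budget. Along $\sigma_t$, the loop $\alpha_t$ is traversed in opposite directions, so the would-be self-intersections there have antiparallel tangents (non-transverse); at the corner $q_t$ where $\sigma$ meets $\gamma_t$, I round the corner by pushing the two incidences of $q_t$ slightly in opposite directions along $\gamma_t$, eliminating what would otherwise be a transverse self-crossing. For distinct $s<t$, the sub-arc $\sigma_s\subset\sigma_t$ is a non-transverse coincidence, and $\gamma_s$, $\gamma_t$ do not meet transversely by hypothesis. The genuine concern is that $\gamma_s$ (as part of $\alpha_s$) crosses $\sigma_t$ (as part of $\alpha_t$) transversely at $q_s$; I resolve this by choosing the corner-smoothings to vary consistently in $t$, so that near $q_s$ the loops $\alpha_s$ and $\alpha_t$ share a common arc rather than cross transversely.

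Finally, the family $\{\alpha_t\}_{t\in[0,1]}$ together with the contraction of $\alpha_1$ to $p$ defines a continuous map $D^2\to M$ with boundary $\gamma_0$. At any generic point $x$ off the image of $\sigma$, the preimages under the new map correspond bijectively to those under the original free homotopy, whose local degree is nonzero by hypothesis; hence the new map also has nonzero local degree. The main obstacle is the monotonicity verification, namely the coherent corner-smoothing at the moving junction points $q_t$ together with the simultaneous avoidance of transverse crossings of $\gamma_s$ with $\sigma_t$ for $s<t$; all the required perturbations are arbitrarily small and fit within the $\epsilon$ budget.
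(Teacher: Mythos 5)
The paper does not prove this lemma; it cites Lemma~3.1 of Liokumovich--Nabutovsky--Rotman \cite{rotman_ls_2017} with a \qed, so there is no ``paper's own proof'' to compare against. That said, your ``tether-and-lasso'' construction is the standard route for results of this kind, and the high-level plan is sound. There are, however, three genuine soft spots where the argument as written does not close.

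First, the continuous selection $q_t=\sigma(\tau_t)$ is asserted but not justified, and it can fail as stated. You perturb $\sigma$ to be transverse to each $\gamma_t$, but over a one-parameter family tangencies are unavoidable at isolated times $t$; at such a $t$ a crossing of $\sigma$ with $\gamma_t$ can be created or destroyed, so the ``first-crossing'' (or any other naive) selection rule need not be continuous. What actually makes this work is the convexity hypothesis on the $\gamma_t$ (together with nestedness and minimality of $\sigma$): one wants $\sigma$ to cross each $\gamma_t$ exactly once, forcing a unique and automatically continuous $\tau_t$. You never invoke convexity; as a result the step where you obtain a continuous $\tau_t$ is a gap. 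An alternative, more robust construction that sidesteps the selection issue entirely is to define $\alpha_t$ as the boundary of $D_t\cup N_t$, where $D_t$ is the disk bounded by $\gamma_t$ and $N_t$ is a neighbourhood of the tether $\sigma|_{[0,\tau_t]}$ that shrinks as $t$ increases and pinches to $p$ at its tip; one checks directly that the regions $D_t\cup N_t$ are nested.

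Second, you correctly identify the main danger --- that $\sigma_t$ crosses $\gamma_s$ transversely at $q_s$ for $s<t$ --- but the proposed cure (``choosing the corner-smoothings to vary consistently in $t$'') is too vague to count as a verification. The modification needed is not purely local: for every $s<t$, $\alpha_s$ must be altered near $q_s$ so that it does not cross the arc of $\sigma$ lying beyond $q_s$, and these alterations for the whole $s$-interval must be mutually non-crossing and depend continuously on $s$. This is exactly what the nested-regions construction above delivers for free, and it is the content you are implicitly invoking; as written, ``round the corner'' plus ``vary consistently'' is a sketch of the idea rather than a proof that the family is monotone.

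Third, the claim at the end that ``the preimages under the new map correspond bijectively to those under the original free homotopy, whose local degree is nonzero by hypothesis'' misattributes the source of nonvanishing degree: nothing in the hypotheses says the original free homotopy has nonzero local degree. This needs to be argued from monotonicity (a monotone contraction of a simple closed curve to an interior point has a generic point of $D_0$ on exactly one $\gamma_t$, giving degree $\pm1$), and then carried over to the modified family by noting the modification changes the map only on a set of measure zero (a neighbourhood of $\sigma$ of arbitrarily small measure). Your reduction to generic points off $\sigma$ is the right move, but the ``by hypothesis'' is a misstep that should be replaced by an actual argument.
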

		
		\noindent
		By Lemma \ref{lemma:shortening_monotonicity}, the hypotheses of this lemma are satisfied when $\gamma_t$ is obtained via the disk flow from a convex initial curve $\gamma_0$, such as $\partial \Omega_i$.
		\par
		We can now prove our main lemma, which ensures the existence of either at least one short simple orthogonal geodesic chord or a radial sweepout through short curves using the techniques described above.
		
		\begin{lemma}
			\label{lemma:main}
			Let $M$ be a Riemannian $2$-disk of diameter $d$ with convex boundary of length $P$. Then at least one of the following holds.
			\begin{enumerate}
				\item 
				$M$ admits three simple orthogonal geodesic chords of index zero and length at most $2d$.
				\item 
				$M$ admits a simple orthogonal geodesic chord of index zero and length at most $2d$, and moreover admits $k$ non-simple orthogonal geodesic chords of index zero and lengths at most $4d+k(2d+P)$ for any integer $k\geq 2$.
				\item 
				There exists a radial sweepout $\Gamma_t$ of $M$ such that either
				\begin{enumerate}
					\item 
					$L(\Gamma_t)\leq 3d+P$, or
					\item 
					$M$ admits a simple orthogonal geodesic chord of index zero and length at most $2d$, and $L(\Gamma_t)\leq 7d+2P +\epsilon$ for any $\epsilon>0$.
				\end{enumerate}
			\end{enumerate}
		\end{lemma}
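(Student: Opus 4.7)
The plan is to apply Lemma \ref{lemma:berger} at a point $x \in M$ at maximum distance from $\partial M$ to produce a minimal collection $\gamma_1,\ldots,\gamma_n$ of minimizing geodesic segments from $x$ to $\partial M$, each of length at most $d$ and meeting $\partial M$ orthogonally, whose tangent directions at $x$ cover $T_xM$ in the sense that every direction has non-negative inner product with some $\gamma_i'(0)$. A direct argument in the two-dimensional tangent space shows $n \in \{2,3\}$; when $n=2$ the two geodesics are antipodal at $x$ and concatenate to a single orthogonal geodesic chord of length at most $2d$. The geodesics divide $M$ into $n$ free boundary convex disks $\Omega_1,\ldots,\Omega_n$, and the piecewise geodesic ``bent chord'' $\eta_i := \gamma_i^{-1}*\gamma_{i+1}$ has length at most $2d$, meets $\partial M$ orthogonally at its endpoints, and is free boundary convex in $\Omega_i$.

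First I would apply the free boundary disk flow to each $\eta_i$. By Theorem \ref{theorem:free_boundary_flow} combined with Lemma \ref{lemma:shortening_monotonicity}, the flow stays in $\Omega_i$ and a subsequence converges either to a point on $\partial M$ or to a simple orthogonal geodesic chord of length at most $2d$. If $n=3$ and all three $\eta_i$'s produce chords we land in case (1). If all $\eta_i$'s contract to points, I would glue the resulting contractions together --- each curve of the sweepout being a curve from one contracting homotopy concatenated with a short geodesic back to $x$ --- to produce a radial sweepout of length at most $3d+P$, giving case 3(a); the extra $P$ accounts for transitions across boundary arcs between consecutive $\Omega_i$'s.

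The substantive work lies in the mixed case where some $\eta_i$ converges to a chord (contributing a simple OGC of length $\leq 2d$) but must itself still be contracted to complete the sweepout. For each such $i$ I would next apply the standard disk flow (Theorem \ref{theorem:disk_flow}) to the convex closed curve $\partial \Omega_i$ of length at most $2d+P$. If this flow contracts $\partial\Omega_i$ to a point, Lemma \ref{lemma:maeda_alt} upgrades the free homotopy to a monotone based contraction of length at most $(2d+P)+2d+\epsilon$, which I would cut open along $\partial M$ to produce a contraction of $\eta_i$ through curves of comparable length; inserting this into the sweepout construction yields the bound of case 3(b). If instead the disk flow is obstructed by a closed geodesic in $\inte\Omega_i$, I would invoke the contraction techniques of \cite{rotman_monotone_2017}, \cite{chambers_rotman_2013}, and \cite{lio_2015}, which contract $\partial\Omega_i$ through families of curves that wind $k$ times around the obstructing geodesic; running the free boundary disk flow on these wound curves produces $k$ non-simple orthogonal geodesic chords of length at most $4d+k(2d+P)$, giving case (2). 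The main expected difficulty is the bookkeeping --- the case split on $n$, the tracking of which $\eta_i$'s flow to points versus chords, and the verification that the various contracting homotopies patch together into a single continuous monotone sweepout that is non-trivial relative to $\partial M$ in the sense of Definition \ref{def:sweepout}.
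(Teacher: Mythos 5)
Your overall skeleton matches the paper's: Berger's lemma at a farthest point from $\partial M$ to get two or three minimizing geodesics orthogonal to $\partial M$, the decomposition into free boundary convex regions $\Omega_i$, the free boundary disk flow applied to the bent chords $-\gamma_i*\gamma_{i+1}$ (which you call $\eta_i$ --- note the paper reserves $\eta_i$ for the boundary arcs), the fallback to the closed-curve disk flow on $\partial\Omega_i$ when a bent chord gets stuck, and Lemma~\ref{lemma:maeda_alt} to convert the resulting free contraction into a based one. Case 1 and case 3(a) are handled correctly, and for case 3(b) the estimate you sketch ($4d+P+\epsilon$ for the based contraction, plus a basepoint adjustment) is the right shape, though you never finish the bookkeeping to reach $7d+2P+\epsilon$.

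There are two genuine gaps. First, the lemma asserts that all of the chords produced have \emph{index zero}, and you never address this. The paper's argument is substantive: if a chord $\rho_j\subset\Omega_j$ obtained from the flow has positive index, it co-bounds a disk $D_j$ with an arc of $\partial M$; one can perturb $\rho_j$ into $\overline{D_j}$ or $\overline{\Omega_j\setminus D_j}$ (both free boundary convex since $\rho_j$ is a geodesic meeting $\partial M$ orthogonally) and continue shortening, either obtaining a strictly shorter chord or completing the contraction. This must be repeated and is needed to land in the stated cases.

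Second, your treatment of case 2 does not work as described. When the closed disk flow on $\partial\Omega_i$ converges to a simple closed geodesic $\rho$ of length at most $2d+P$, the paper does not invoke \cite{rotman_monotone_2017}, \cite{chambers_rotman_2013}, or \cite{lio_2015} --- those references do not "contract $\partial\Omega_i$ through curves winding $k$ times around the obstruction," and they would not yield the claimed length bound. Instead the paper makes a direct construction: choose a geodesic $\xi$ in the free boundary convex annulus $\Omega_i\setminus\Omega_i'$ joining the boundary arc $\eta_i$ to $\rho(0)$ with $L(\xi)\leq 2d$, form the explicit curve $\xi*\rho^k*{-\xi}$ of length at most $4d+k(2d+P)$, and run the free boundary disk flow on it. Since this curve is not contractible rel $\eta_i$, it cannot shrink to a point, so by Theorem~\ref{theorem:free_boundary_flow} it converges to an orthogonal geodesic chord, which is non-simple for $k\geq 2$. (One also needs $k\geq 2$ rather than $k\geq 1$ to guarantee the limit is distinct from the original bent chord.) Without this explicit intermediate curve, your version gives neither the length bound nor the non-simplicity, and it does not explain why $k$ distinct chords arise.
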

		
		\begin{figure}[ht]
			\centering
			\includegraphics[width=0.45\linewidth]{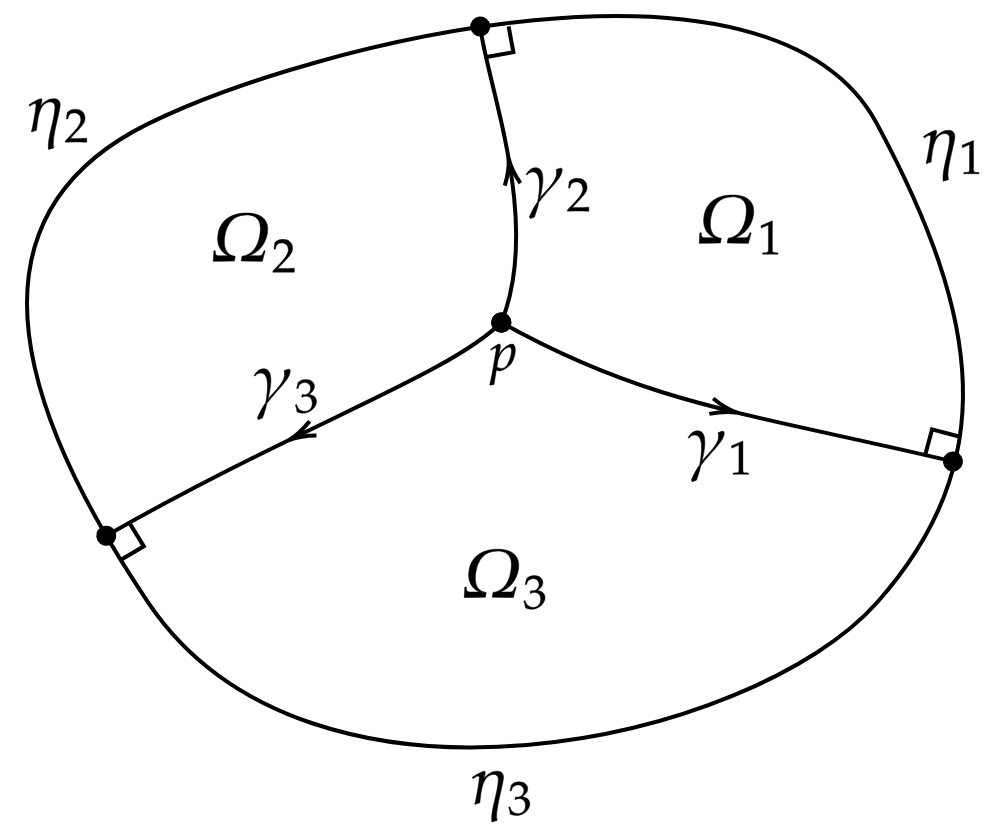}
			\caption{An example illustration of the disks $\Omega_i$ in the proof of Lemma \ref{lemma:main}. Note that there may only be two disks, $\Omega_1$ and $\Omega_2$.}
			\label{fig:berger}
		\end{figure}
		
		\begin{proof}
			Let $p$ be a point in $M$ at maximal distance from $\partial M$. By Lemma \ref{lemma:berger}, there is some minimal number of minimizing geodesic segments $\gamma_i$, $1\leq i$, starting at $p$ and ending on $\partial M$ so that given any $v\in T_pM$, $\langle v,\gamma_i'(0) \rangle \geq 0$ for some $i$. Moreover, each $\gamma_i$ lies in $M$ except at its final point, where it is orthogonal to $\partial M$. If there are only two such geodesics, they necessarily meet at an angle $\pi$ at $p$ and hence form an orthogonal geodesic chord of length at most $2d$. The other possibility is that there are three distinct geodesics $\gamma_i$. For convenience, we number these geodesics cyclically so that $\gamma_3=\gamma_1$ if $i\leq 2$ and otherwise $\gamma_4=\gamma_1$. Divide $\partial M$ into simple arcs $\eta_i$ so that $M$ is divided into closed regions $\Omega_i$ bounded by $\partial\Omega_i=-\gamma_i*\gamma_{i+1}*\eta_i$. 
			Note that the interior angle of $\partial \Omega_i$ at $p$ is at most $\pi$ and the interior angles of $\partial \Omega_i$ at $\partial M$ equal $\pi/2$, making $\Omega_i$ free boundary convex. When there are three geodesics, an example configuration is depicted in Figure \ref{fig:berger}.
			\par 
			Apply the free boundary disk flow to the simple curve $-\gamma_i*\gamma_{i+1}$ for each $i$. By Lemma \ref{lemma:shortening_monotonicity} and Theorem \ref{theorem:free_boundary_flow}, we either obtain a simple orthogonal geodesic chord in $\Omega_i$ of length at most $L(-\gamma_i*\gamma_{i+1})\leq 2d$, or we obtain a monotone homotopy $H_t^i$ in $\Omega_i$ contracting $-\gamma_i*\gamma_{i+1}$ to a point through simple curves with endpoints on $\eta_i\subset\partial M$ such that $L(H_t^i)\leq 2d$. 
			The first case of this lemma occurs when we have three distinct geodesics $\gamma_i$ and all three pairs $-\gamma_i*\gamma_{i+1}$ converge to simple orthogonal geodesic chords. These chords are distinct, as they lie in the interiors of the disjoint regions $\Omega_i$ by convexity. We can also assume that these geodesics have index zero for the following reason. Suppose we obtain a chord $\rho_j$ in $\Omega_j$ with positive index. The curve $\rho_j$ is simple and hence, in conjunction with an arc of $\partial M$, bounds a disk inside $\Omega_j$, which we will call $D_j$. Since $\rho_j$ has positive index, we can perturb it either into $\overline{D_j}$ or into $\overline{\Omega_j\setminus D_j}$ to obtain a strictly shorter curve $\rho_j'$. Continue shortening $\rho_j'$. Since $\rho_j$ is a geodesic that meets $\partial M$ orthogonally, both $\overline{\Omega_j\setminus D_j}$ and $\overline{D_j}$ are free boundary convex. Therefore, either $\rho_j'$ remains in $\overline{\Omega_j\setminus D_j}$ and shortens to a non-trivial orthogonal geodesic chord of length at most $2d$, or it remains in $\overline{D_j}$. In that case, either we once again obtain a chord or we continue our monotone homotopy $H_t^j$ in $D_j$ to finish contracting $-\gamma_j*\gamma_{j+1}$. Note that if we obtain a chord, then it is distinct from $\rho_j$, giving us our desired pair of short orthogonal geodesic chords.
			\par 
			If $-\gamma_i*\gamma_{i+1}$ failed to contract, we apply the standard disk flow to $\partial \Omega_i$. Suppose we thus obtain a simple closed geodesic $\rho$ in $\Omega_i$ of length at most $L(\partial\Omega_i)\leq 2d+P$. Then $\rho$ bounds a convex disk $\Omega_i'\subsetneq \Omega_i$, making $\Omega_i\setminus\Omega_i'$ a free boundary convex annulus. Reparameterize $\rho$ so that $\rho(0)$ is at minimal distance to $\eta_i$ among points on $\rho$, and let $\xi$ be a geodesic in $\Omega_i\setminus\Omega_i'$ connecting $\eta_i$ to $\rho(0)$. Note that such a geodesic exists by convexity of $\Omega_i\setminus\Omega_i'$. Moreover, we can assume $\xi$ has length at most $2d$, since we can connect $\rho(0)$ to the closest point on $\partial\Omega_i$ by a curve of length at most $d$ and, if necessary, follow $\gamma_i$ or $\gamma_{i+1}$ to $\eta_i$.     
			For any $k\geq 2$, consider the curve $\xi*\rho^k*-\xi$ (see Figure \ref{fig:geo_circles}, left). Under the free boundary disk flow, the endpoints of this curve will remain on $\eta_i$ by convexity. However, for $k\geq1$ the curve $\xi*\rho^k*-\xi$ cannot be contracted to a point through curves with endpoints on $\eta_i$. Therefore, by Theorem \ref{theorem:free_boundary_flow} this curve must converge to an orthogonal geodesic chord of length at most $2L(\xi)+kL(\rho)\leq 4d+k(2d+P)$ (see Figure \ref{fig:geo_circles}, right), which is non-simple for $k\geq 2$. As before, we can assume that these chords have index zero or else they can be shortened further. This gives rise to the second case of our lemma. Note that if $k=1$ and $-\gamma_i*\gamma_{i+1}$ is an orthogonal geodesic chord, it is possible for $\xi*\rho^k*-\xi$ to converge to $-\gamma_i*\gamma_{i+1}$. Thus, we require $k\geq 2$ in order to obtain distinct chords.
			\par 
			\begin{figure}[ht]
				\centering
				\includegraphics[width=1\textwidth]{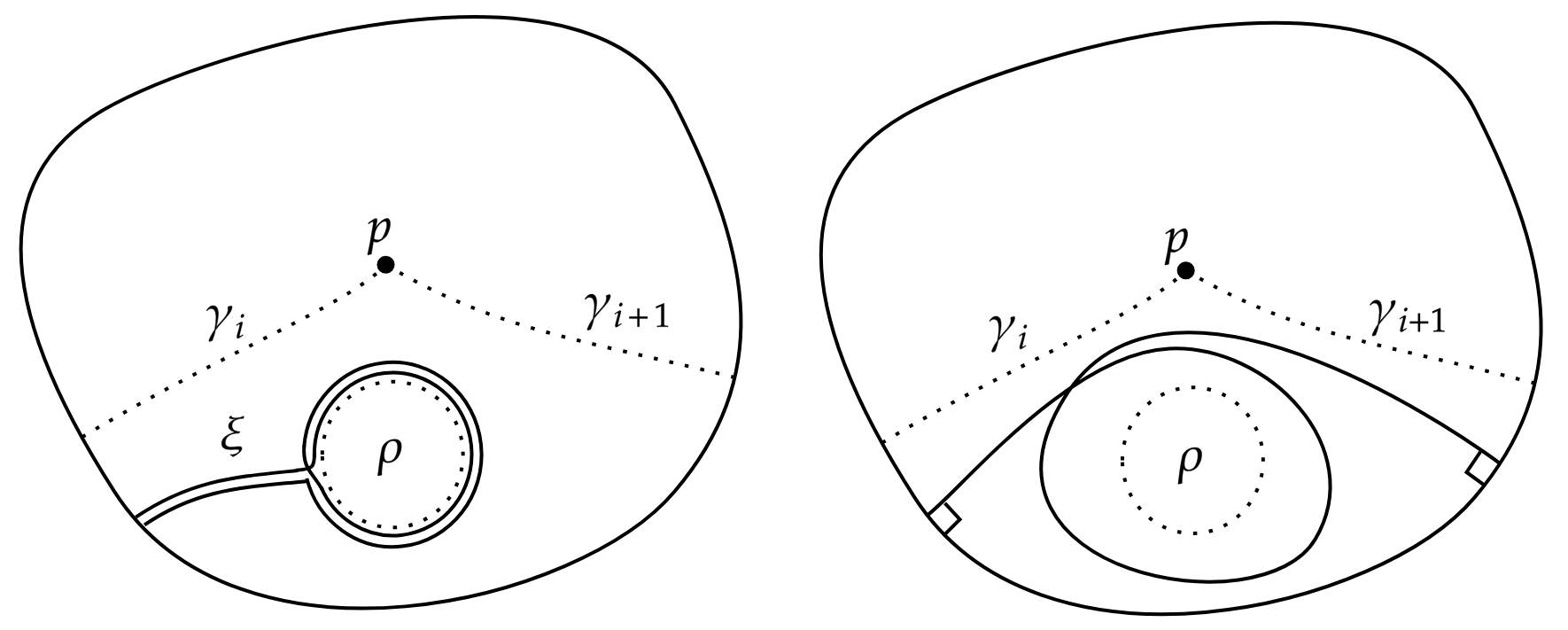}
				\caption{Illustration of the proof of Lemma \ref{lemma:main}. Left: the curve $\xi*\rho^k*-\xi$ (solid) for $k=2$. Right: an orthogonal geodesic chord (solid) obtained by shortening $\xi*\rho^k*-\xi$ for $k=2$.}
				\label{fig:geo_circles}
			\end{figure}
			The other possibility is that the disk flow produces a free homotopy of $\partial \Omega_i$ to a point through curves in $\Omega_i$ of length at most $L(\partial\Omega_i)\leq 2d+P$. By Lemma \ref{lemma:shortening_monotonicity}, this homotopy is monotone and passes through convex curves. Therefore by Lemma \ref{lemma:maeda_alt}, for any $\epsilon>0$ this homotopy can be converted to a monotone homotopy through loops based at some point $q\in\partial \Omega_i$ of length $4d +P+\epsilon$. If $q\not\in\eta_i$, we can add two subarcs of an appropriate choice of $\gamma_i$ to change the basepoint of these loops to an endpoint of $\eta_i$ at the cost of adding at most $2d$ to the length. Note that the resulting homotopy is still monotone. This defines our desired homotopy $H^i_t$, which passes through curves of length at most $6d +P+\epsilon$. 
			\par 
			If we do not encounter a short closed geodesic when applying the disk flow to any $\partial \Omega_i$, then we have successfully defined $H^i_t$ for each $i$. This is case three of our lemma. 
			We use these homotopies to construct $\Gamma_t$ as follows. Define $\eta^i_t$ as the arc of $\eta_i$ connecting $\gamma_i(1)$ to $H_t^i(0)$ (see Figure \ref{fig:third_hom}). Divide the domain $S^1$ of $\Gamma_t$ into two or three equal segments, depending on the number of distinct geodesics $\gamma_i$ we have. We define $\Gamma_t$ on the $i$th segment of $S^1$ by extending $\gamma_i$ to $\gamma_i*\eta^i_0$, then by the family $t\mapsto\gamma_i*\eta_{t}^i*H_t^i$, and then lastly by the contraction of $\gamma_i*-\gamma_i*\gamma_{i-1}$ to $\gamma_{i-1}$ along itself. We have 
			\begin{equation}
				\label{eqn:sweep_bound}
				L(\Gamma_t)
				\leq \max_{i,s}\{L(\gamma_i)+L(\eta^i_s)+L(H_s^i)\}
				\leq d+P+\max_{i,s}\{L(H_s^i)\}.
			\end{equation}
			If every pair $-\gamma_i*\gamma_{i+1}$ contracted to a point under the free boundary disk flow, then $L(H_t^i)\leq 2d$ for every $i$ and hence
			$L(\Gamma_t)\leq 3d+P$. This is case 3(a). Otherwise, at least one curve $-\gamma_i*\gamma_{i+1}$ shortened to a simple orthogonal geodesic chord of length at most $2d$. Therefore, we needed to use the disk flow to contract the boundary of at least one region $\Omega_i$. We have $L(H_t^i)\leq 6d +P+\epsilon$ for such $i$ and hence $L(\Gamma_t)\leq 7d+2P +\epsilon$,
			which is case 3(b). This completes the proof.
		\end{proof}
		\begin{figure}[ht]
			\centering
			\includegraphics[width=0.45\linewidth]{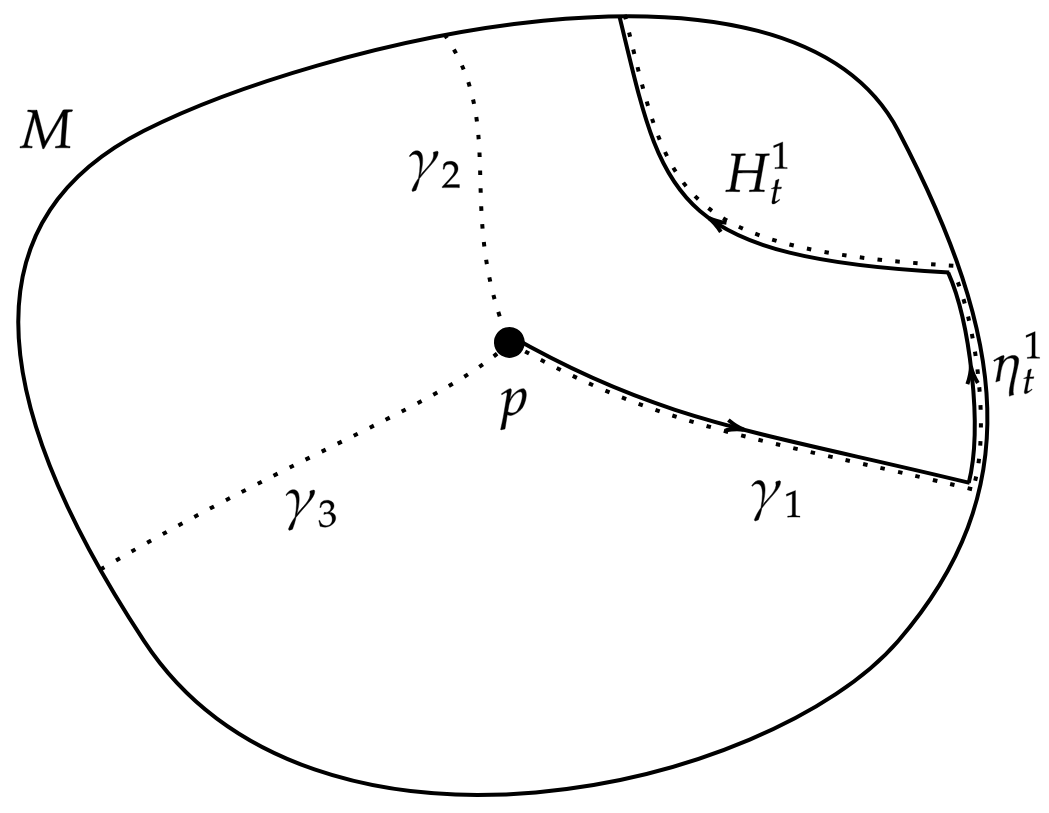}
			\caption{An example of a curve in the family $\Gamma_t$ defined in the proof of Lemma \ref{lemma:main}.}
			\label{fig:third_hom}
		\end{figure}
		
		In fact, by Theorem 1.6 of \cite{lio_2015} there is always a sweepout of $M$ through loops of bounded length regardless of the presence of short closed geodesics or short orthogonal geodesic chords. Unfortunately, this result only guarantees a relatively large length bound, so we avoid applying it whenever possible. 
		
		\begin{theorem}
			\label{theorem:main_alt}
			Let $M$ be a Riemannian $2$-disk with diameter $d$, boundary length $P$, and area $A$. Then for any $\epsilon>0$, there exists a sweepout $\omega_t:[0,1]\to \Sigma_{\partial M} M$ such that 
				$L(\Gamma_t)\leq 2d+2P+686\sqrt{A}+\epsilon.$
		\end{theorem}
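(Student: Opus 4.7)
The plan is to invoke Theorem 1.6 of \cite{lio_2015} as a black box and reduce the theorem to a conversion step. Liokumovich's result gives a sweepout of $M$ by simple loops based at some point $p\in M$, with each loop having length at most $2P+686\sqrt{A}+\epsilon/2$, and with the sweepout representing a non-trivial element of the relevant homotopy group of the loop space. From this loop sweepout, I want to produce a family of curves with endpoints on $\partial M$ at a cost of at most $2d$ in length.

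First, I would choose a minimizing geodesic $\xi$ from $p$ to a closest point $q\in\partial M$; since $p\in M$, we have $L(\xi)\le d$. For each loop $\lambda_t$ in Liokumovich's sweepout, I would define $\omega_t$ to be (a small perturbation of) $\xi^{-1}*\lambda_t*\xi$, a curve starting and ending at $q\in\partial M$. Since a perturbation can be performed so that $\omega_t$ does not have transverse self-intersections (the doubled arc along $\xi$ can be pushed apart slightly into two nearby simple arcs joining $q$ to either side of $p$, and $\lambda_t$ itself is simple), we get $\omega_t\in\Sigma_{\partial M}M$ with
\begin{equation*}
L(\omega_t)\le 2L(\xi)+L(\lambda_t)\le 2d+2P+686\sqrt{A}+\epsilon.
\end{equation*}

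Second, I would verify that the resulting family $\omega_t:[0,1]\to\Sigma_{\partial M}M$ represents the appropriate non-trivial relative homotopy/homology class. At the endpoints of Liokumovich's sweepout, $\lambda_0$ degenerates to the point $p$ (so $\omega_0$ becomes $\xi^{-1}*\xi$, which is homotopic rel endpoints in $\Sigma_{\partial M}M$ to the point curve $q\in\Sigma^0_{\partial M}M$) and $\lambda_1$ is a loop traversing (or homotopic to a loop close to) $\partial M$ (so $\omega_1$ can be identified with a curve close to a full traversal of $\partial M$, giving the non-trivial boundary condition). Monotonicity of the topological index in the Liokumovich sweepout transfers directly to $\omega_t$.

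The main obstacle, and really the only non-trivial point, is carrying out the perturbation of $\xi^{-1}*\lambda_t*\xi$ consistently across the one-parameter family so that the family remains continuous in $\Sigma_{\partial M}M$ and the endpoint identifications match the required sweepout boundary conditions. This is a standard transversality/bookkeeping argument, and the additive $\epsilon$ in the length bound absorbs any length increase caused by the perturbation.
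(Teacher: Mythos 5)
Your reading of Theorem 1.6 of \cite{lio_2015} does not match the paper's. The paper quotes it as producing a sweepout through loops based at a point $p\in\partial M$ with length bound $2d+2P+686\sqrt{A}$ already, so the conjugation step you add (trading $2d$ of length to transport a base point from the interior to the boundary) is not needed and does not match the source. More importantly, you also assume Liokumovich's loops are already simple; the paper observes that this is only \emph{implicit} in \cite{lio_2015} and deliberately avoids relying on it.

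The genuine gap is the step you yourself flag as ``the main obstacle'': making the family live in $\Sigma_{\partial M}M$, i.e., free of transverse self-intersections. A curve of the form $\xi^{-1}*\lambda_t*\xi$ is not merely a doubled arc: wherever the loop $\lambda_t$ crosses $\xi$ transversely at an interior point, the conjugated curve has a genuine transverse self-intersection, and the number and pattern of these crossings changes with $t$. ``Pushing the two copies of $\xi$ apart'' does not remove them. Resolving self-intersections continuously along a one-parameter family, while keeping the length increase small and preserving the non-trivial relative homotopy class, is not a bookkeeping perturbation; it is exactly the content of Theorem 1.1 of \cite{chambers_2013}, which is the key citation in the paper's proof and which your argument omits. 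The paper further has to check that the Chambers--Liokumovich surgery, stated for free homotopies of closed curves, can be localized away from the boundary base point so that it applies to loops based at $p\in\partial M$. Without invoking that result (or reproving it), your proposal does not close the gap it identifies.
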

		\begin{proof}
			By Theorem 1.6 of \cite{lio_2015}, there is a sweepout $\Gamma_t$ of $M$ through loops based at some $p\in\partial M$ with $L(\Gamma_t)\leq 2d+2P+686\sqrt{A}$. It is implicit in the paper that this sweepout can be taken to pass through simple curves that do not mutually intersect except at $p$. 
			%This is a consequence of the main theorem of \cite{chambers_rotman_2013}, which proves that a free homotopy through curves of length at most $\ell$ can be converted to a free homotopy through simple non-intersecting curves of length at most $\ell+\epsilon$ for any $\epsilon>0$. 
			We only require that the sweepout passes through simple curves, which is due to Theorem 1.1 of \cite{chambers_2013}. This theorem states that, for any $\epsilon>0$, a free homotopy that starts and ends at a simple curve and passes through curves of length at most $\ell$ can be converted to a free homotopy through simple curves of length at most $\ell+\epsilon$. This theorem is proven in two steps. First, the authors show that a generic free homotopy consists of curves that only self-intersect transversely and do so at finitely many points in finitely many configurations. They then do surgery on these curves to resolve these self-intersections. These changes are only made in an arbitrarily small neighbourhood of the self-intersections, and so this argument applies equally well to loops based at $p\in \partial M$ that have no self-intersections at $p$. This can be assured by homotoping $\Gamma_t$ slightly to move any self-intersections off of $p$. 
		\end{proof}

		\subsection{Pulling Tight}
		\label{sec:tighten}
		
		Recall that $\Sigma_{\partial M} M$ is the space of unparameterized curves in $M$ with endpoints on $\partial M$ and no transverse self-intersections, and $\Sigma^0_{\partial M} M$ is the space of unparameterized point curves on $\partial M$. To obtain min-max orthogonal geodesic chords, we will construct a pair of maps that realize two non-trivial homology classes of the pair $(\Sigma_{\partial M} M,\Sigma^0_{\partial M} M)$ and whose images consist of curves of bounded length. We will then apply a free boundary curve shortening flow to these maps to obtain a pair of simple orthogonal geodesic chords of bounded length. If these chords are identical, we then apply a Lyusternik--Schnirelmann style argument to obtain infinitely many simple orthogonal geodesic chords of the same length. Further details of this method can be found in Theorem 3.11 of \cite{hass1994} and Appendix A.3 of \cite{klingenberg_1978}.
		
		\begin{lemma}
			\label{lemma:pull_tight}
			Let $M$ be a Riemannian 2-disk with convex boundary that admits a radial sweepout $\Gamma_t$. Define $\max_tL(\Gamma_t)= l$ and $\min_t L(\Gamma_t)=l'$. Then $M$ admits a non-trivial simple orthogonal geodesic chord of length at most $l+l'$ and a second distinct non-trivial simple orthogonal geodesic chord of length at most $2l$.
		\end{lemma}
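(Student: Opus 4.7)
The plan is to run the standard Lyusternik--Schnirelmann min-max argument on the pair $(\Sigma_{\partial M}M, \Sigma^0_{\partial M}M)$, realising its two non-trivial relative homology classes by explicit maps built from $\Gamma_t$, and then pulling those maps tight with the family free boundary disk flow of Theorem \ref{theorem:family_shortening}.

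For the $1$-cycle, fix $t_0 \in S^1$ with $L(\Gamma_{t_0}) = l'$ and define $f \colon S^1 \to \Sigma_{\partial M}M$ by $f(t) = -\Gamma_{t_0} * \Gamma_t$; that is, travel along $\Gamma_{t_0}$ backwards from $\partial M$ to $p$ and then along $\Gamma_t$ back out to $\partial M$. Monotonicity of the sweepout guarantees that $\Gamma_s$ and $\Gamma_t$ never cross transversely, so each $f(t)$ has no transverse self-intersections. At $t = t_0$ the curve is retraceable and so is identified in $\Sigma_{\partial M}M$ with the point curve $\Gamma_{t_0}(1) \in \Sigma^0_{\partial M}M$. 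Hence $f$ represents a relative $1$-cycle of maximum length at most $l + l'$, and its non-triviality in $H_1(\Sigma_{\partial M}M, \Sigma^0_{\partial M}M)$ is inherited from the homotopical non-triviality of the radial sweepout. For the $2$-cycle I would take $g \colon S^1 \times S^1 \to \Sigma_{\partial M}M$, $g(s,t) = -\Gamma_s * \Gamma_t$, whose diagonal maps into retraceable curves that descend to $\Sigma^0_{\partial M}M$; this realises the non-trivial relative $2$-class with maximum length at most $2l$.

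Next I would apply Theorem \ref{theorem:family_shortening} to both $f$ and $g$, producing homotopic families $f_\tau, g_\tau$ of non-increasing maximum length whose curves remain free of transverse self-intersections. Non-triviality of the two classes forbids the families from being entirely absorbed into $\Sigma^0_{\partial M}M$, so item~(3) of the theorem yields simple orthogonal geodesic chords $\gamma_1$ (from $f_\tau$) and $\gamma_2$ (from $g_\tau$) of respective lengths $c_1 \le l + l'$ and $c_2 \le 2l$. If either $c_1 < c_2$ or $\gamma_1 \ne \gamma_2$, we are immediately done.

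The principal obstacle is the degenerate case $\gamma_1 = \gamma_2$ at a common critical level $c = c_1 = c_2$. Here I would invoke the standard Lyusternik--Schnirelmann deformation argument using the previously cited Lemma \ref{lemma:shorten_or_nbhd}: if the critical set at level $c$ consisted of the single chord $\gamma_1$, then an arbitrarily small neighbourhood of $\gamma_1$ would admit a local flow supporting a deformation of $g_\tau$ (within its relative homology class) to strictly below level $c$ off that neighbourhood, contradicting the min-max characterisation of $c$ for the $2$-class. Consequently the critical set at $c$ must contain a second simple orthogonal geodesic chord $\gamma_2' \ne \gamma_1$, still of length $c \le l + l' \le 2l$, which completes the proof.
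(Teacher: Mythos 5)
Your proposal follows the paper's overall strategy: build a relative $1$-cycle and a relative $2$-cycle from the radial sweepout, pull them tight with a free boundary flow, and handle the degenerate case with a Lyusternik--Schnirelmann deformation. Your $1$-cycle $f(t)=-\Gamma_{t_0}*\Gamma_t$ with $L(\Gamma_{t_0})=l'$ matches the version the paper uses for the improved length estimate $w_1\leq l+l'$. Your $2$-cycle is parameterised by $S^1\times S^1$ rather than the paper's M\"obius band $X$; since $-\Gamma_s*\Gamma_t$ and $-\Gamma_t*\Gamma_s$ are the same element of $\Sigma_{\partial M}M$, the map factors through the symmetric product, which \emph{is} the M\"obius band, so the paper's choice of domain is the more faithful one (and one must be careful with coefficients: $H_2$ of the M\"obius band rel boundary vanishes over $\bZ$), but this is not a fatal issue.

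The genuine gap is your handling of the degenerate case $c_1=c_2=c$, $\gamma_1=\gamma_2$. You assert that the existence of a single critical chord would permit a deformation of $g_\tau$ to ``strictly below level $c$ off that neighbourhood, contradicting the min-max characterisation of $c$ for the $2$-class.'' That is not a contradiction: after the deformation, curves landing in the small neighbourhood $U_1$ of $\gamma_1$ still have length near $c$, so the maximum length over the deformed $2$-parameter family is still approximately $c$, perfectly consistent with $w_2=c$. The contradiction must be extracted via the \emph{$1$-class}. In the paper's argument, after flowing one obtains a representative $\widetilde{f_2}$ of the $2$-class whose image lies in $U_1\cup\Sigma^{c-\epsilon}_{\partial M}M$. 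The preimage of $\Sigma^{c-\epsilon}_{\partial M}M$ in $X$ cannot contain any curve non-contractible mod $\partial X$ --- otherwise restricting $\widetilde{f_2}$ to it would yield a representative of the $1$-class with all lengths below $c-\epsilon$, contradicting $w_1=c$. Hence (WLOG) that preimage is a union of disks, so its complement contains a curve $\sigma$ non-contractible mod $\partial X$. Restricting $\widetilde{f_2}$ to $\sigma$ gives a map homotopic to $f_1$ with image in $U_1$, which is contractible --- contradicting the homotopical non-triviality of $f_1$. Your write-up does not reach this chain of reasoning.

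A smaller issue: you apply item~(3) of Theorem~\ref{theorem:family_shortening} to conclude that the limits $\gamma_1,\gamma_2$ realise the critical levels, but that item only guarantees a convergent subsequence of curves, not that the limit has the min-max length. The paper's cleaner route is to define $w_i$ directly, show $w_i>0$ via the degree of the induced map $(M,\partial M)\to(M,\partial M)$, and then invoke Lemma~\ref{lemma:shorten_or_nbhd} to produce a chord of length exactly $w_i$.
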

		\begin{proof}
			We begin by using the radial sweepout to construct 
			two maps realizing our two non-trivial homology classes. The first map 
			$$f_1:([0,\pi],\partial[0,\pi])\to (\Sigma_{\partial M}M,\Sigma^0_{\partial M} M)$$
			is defined as follows. We start by taking $f_1(t)= [-\Gamma_t* \Gamma_{-t}]$ for $t\in[0,\pi]$, where square brackets denote the projection into $\Sigma_{\partial M}M$ (i.e., forgetting the orientation). We must modify this map so that $f_1(0),f_1(\pi)\in \Sigma^0_{\partial M} M$. We thus define $f_1(t)$ for $t\in [-\epsilon,0]$ as the contraction of $[-\Gamma_0* \Gamma_{0}]$ along itself to the point curve $\Gamma_0(1)\in\partial M$. Similarly, for $t\in [\pi,\pi+\epsilon]$ we follow the contraction of $[-\Gamma_\pi* \Gamma_{\pi}]$ along itself to the point curve $\Gamma_\pi(1)\in\partial M$. Then we reparameterize $f_1$ by $[0,\pi]$.
			Our second map is $$f_2:(X,\partial X)\to (\Sigma_{\partial M}M,\Sigma^0_{\partial M} M),$$ 
			where $X$ is the M\"obius band given by a quotient of $[0,\pi]^2$. For $(s,t)\in[0,\pi]^2$, we define $f_2(s,t)=[-\Gamma_{s+t} * \Gamma_{s-t}]$. Since $\Gamma_{2\pi-t}=\Gamma_{-t}$, we have 
			$$f_2(0,t)=[-\Gamma_{t}* \Gamma_{-t}]=[-\Gamma_{-t}* \Gamma_{t}]=[-\Gamma_{\pi+(\pi-t)}* \Gamma_{\pi-(\pi-t)}]=f_2(\pi,\pi-t),$$ 
			which makes $f_2$ into a map from the M\"obius band as claimed. As in the definition of $f_1$, for any $s\in[0,\pi]$ the curves $f_2(s,0)=[-\Gamma_s*\Gamma_s]$ and $f_2(\pi-s,\pi)=[-\Gamma_{-s}*\Gamma_{-s}]$ can be contracted along themselves to point curves on $\partial M$. As before, we modify $f_2$ to incorporate these homotopies for each $s$, so that $f_2(s,0),f_2(\pi-s,\pi)\in\Sigma^0_{\partial M} M$.
			\par
			For $i=1,2$, consider the min-max values
			\begin{align*}
				w_i&=\inf_{\phi\simeq f_i}\max_{x\in \operatorname{dom}f_i} L(\phi(x)),
			\end{align*}
			where the infimum is taken over all maps homotopic to $f_i$. Therefore, $$0\leq w_i\leq \max_{x\in \operatorname{dom}f_i} L(f_i(x))\leq 2l.$$ Note that in fact we can represent $f_1$ by a modification of the map $t\mapsto [-\Gamma_{t_0}*\Gamma_{t}]$, $t\in[t_0,2\pi+t_0]$, for any fixed $t_0\in [0,2\pi]$. This gives the improved estimate $w_1\leq l'+l$ by choosing $t_0$ so that $\Gamma_{t_0}$ has minimum length.
			\par 
			We claim that $w_i>0$. Suppose instead that $w_1=0$. Then there are maps homotopic to $f_1$ whose images consist of curves of arbitrarily small length. Any such map $\widetilde{f_1}$ whose curves are all shorter than the injectivity radius of $M$ can be homotoped so that its image lies in $\Sigma^0_{\partial M} M$ by applying the free boundary disk flow. On the other hand, $\Gamma_t$ is monotone and covers $M$, and hence $f_1$ induces a map of the disk to itself of non-zero local degree. Therefore, $f_1$ induces a homotopically non-trivial map of $(M,\partial M)\simeq S^2$ to itself. However, $f_1$ is homotopic to $\widetilde{f_1}$, which  we just proved is nullhomotopic. This gives rise to a homotopy of a map of non-zero degree to a constant map, which is a contradiction. Similarly, if $w_2=0$, then there would be a contractible map homotopic to $f_2$. Such a map would give rise to a homotopy of $f_1$ to a constant map, since $s\mapsto f_2(s,0)=[-\Gamma_{s+0} * \Gamma_{s-0}]$ equals $f_1$. This once again leads to a contradiction, so $w_i>0$.
			\par 
			Next we claim that there exists an orthogonal geodesic chord of length $w_i$. Such a chord is necessarily non-trivial by the above argument. Given $c\geq 0$, let $\Sigma_{\partial M}^cM$ be the subset of $\Sigma_{\partial M}M$ of curves of length at most $c$. We need the following property of the free boundary flow, adapted from the corresponding statement for the standard disk flow in \cite{hass1994} (see also Theorem 5.1 of \cite{ko2023}). The proof is identical.
			\begin{lemma}[see Lemma 3.12 of \cite{hass1994}]
				\label{lemma:shorten_or_nbhd}
				Let $U_i$ be an arbitrarily small neighbourhood in $\Sigma_{\partial M}M$ of the set of simple orthogonal geodesic chords of length $w_i$. For sufficiently small $\epsilon>0$, the free boundary disk flow maps $\Sigma_{\partial M}^{w_i+\epsilon}M$ into $\Sigma_{\partial M}^{w_i-\epsilon}M\cup U_i$ in unit time. \qed
			\end{lemma}
			\noindent
			We utilize this fact as follows. By the definition of $w_i$, for any $\epsilon>0$ we can pick a map $\widetilde{f_i}$ homotopic to $f_i$ such that the curves in the image of $\widetilde{f_i}$ have length at most $w_i+\epsilon$. If there are no simple orthogonal geodesic chords of length $w_i$, then by the above argument we can apply the flow to the image of $\widetilde{f_i}$ to obtain a homotopic map $\widetilde{f_i'}$ whose image consists of curves shorter than $w_i-\epsilon$ (since we can take $U_i=\emptyset$). This is a contradiction of the definition of $w_i$. Thus there must exist simple orthogonal geodesic chords $\gamma_i$ with $L(\gamma_1)= w_1 \leq l'+l$ and $L(\gamma_2)= w_2\leq 2l$.
			\par 
			If these two curves have different lengths, then being orthogonal geodesic chords they necessarily have distinct images. Suppose instead $w_1=w_2$ and suppose $\gamma_1=\gamma_2$ is the only simple orthogonal geodesic chord of length $w_1$. 
			By the above argument, we can find a map $\widetilde{f_2}$ homotopic to $f_2$ whose image consists of curves in $U_1\cup \Sigma_{\partial M}^{w_1-\epsilon}M$. The set $(\widetilde{f_2})^{-1}(\Sigma_{\partial M}^{w_1-\epsilon}M)\subseteq X$ cannot contain any curves that are non-contractible mod $\partial X$. This is because the restriction of $\widetilde{f_2}$ to such a curve would be a map homotopic to $f_1$ whose image only contains curves strictly shorter than $w_1$, contradicting the definition of $w_1$. Therefore, without loss of generality $(\widetilde{f_2})^{-1}(\Sigma_{\partial M}^{w_1-\epsilon}M)$ consists of some number of disks, and hence its complement must contain a curve that is non-contractible mod $\partial X$. However, the restriction of $\widetilde{f_2}$ to such a curve would be a map homotopic to $f_1$ whose image lies in $U_1$. Therefore, this map is also homotopic to the constant map that sends every point to the curve $\gamma_1$, contradicting the fact that $f_1$ is homotopically non-trivial. Therefore, we must always obtain two distinct simple orthogonal geodesic chords of respective lengths $w_1$ and $w_2$, as claimed.
		\end{proof}
		
		We now combine Lemma \ref{lemma:main}, Theorem \ref{theorem:main_alt} and Lemma \ref{lemma:pull_tight} to prove Theorem \ref{theorem:main}.
		
		\begin{proof}
			Apply Lemma \ref{lemma:main} to $M$. Suppose the third case of this lemma holds, so that we obtain a radial sweepout $\Gamma_t$ of $M$ of length bounded by either $3d+P$ or $7d+2P+\epsilon$. Note that by construction, the shortest curve of $\Gamma_t$ is no longer than $d$. By Lemma
			\ref{lemma:pull_tight}, this sweepout gives rise to a pair of orthogonal geodesic chords. In case 3(a), these chords have respective lengths at most $4d+P$ and $6d+2P$. In case 3(b), they have have respective lengths at most $8d+2P+\epsilon$ and $14d+4P+2\epsilon$. In fact, in case 3(b) we can assume they have respective lengths at most $8d+2P$ and $14d+4P$ by taking a convergent subsequence of chords as $\epsilon$ tends to zero.
			\par
			On the other hand, by Theorem \ref{theorem:main_alt} $M$ always admits a 1-parameter family of curves $\omega_t:[0,1]\to \Sigma_{\partial M}M$ that induces a map of non-zero degree. Moreover, for any $\epsilon$ we can pick $\omega_t$ so that each curve has length at most $2d+2P+686\sqrt{A}+\epsilon$. By applying the proof of Lemma \ref{lemma:pull_tight} to this family and taking a limit as $\epsilon$ tends to zero, we obtain one simple orthogonal geodesic chord with length at most $2d+2P+686\sqrt{A}$. Note that this may be the same chord as one of the pair obtained above if Case 3 of Lemma \ref{lemma:main} holds. Conversely, any chords obtained by applying Lemma \ref{lemma:pull_tight} have positive Morse index, as they were found via the min-max method. Therefore these chords are distinct from any chords of index zero we obtained in the proof of Lemma \ref{lemma:main}, giving us additional chords if Case 1 or Case 2 of Lemma \ref{lemma:main} holds. This completes our proof.
		\end{proof}
		
		\subsection{The Non-Convex Case}
		\label{sec:concave}
		
		We now extend our results to manifolds with non-convex boundary using a technique developed by Seifert \cite{seifert_1949} and utilized by \cite{gluck1984}, proving Theorem \ref{theorem:main_concave}. 
		
		\begin{proof}
			Extend $M$ slightly past $\partial M$ so that it is embedded in a larger Riemannian manifold $M'$. Let $\epsilon>0$ be small enough that we can parameterize an $\epsilon$-neighbourhood $U_\epsilon\subset M'$ of $\partial M$ by minimizing geodesics perpendicular to $\partial M$. This makes $U_\epsilon$ diffeomorphic to $\partial M\times[-\epsilon,\epsilon]$, where $\partial M\times[-\epsilon,0]\subset M$. Let $\gamma_x(t)$ be the minimizing geodesic orthogonal to $\partial M$ at the point $x$. Define $M\cup U_\epsilon=M_\epsilon$ and $N_{t_0}=\partial M\times\{t_0\}$ for $t_0\in [-\epsilon,\epsilon]$.
			\par
			Each geodesic $\gamma_x(t)$ is orthogonal to every set $N_{t_0}$ by the following argument. Identify a small portion of $\partial M$ centred at $x$ with $(-\delta,\delta)$. Let $f(s,t):(-\delta,\delta)\times[0,t_0]\to M$, $0<t_0\leq \epsilon$, be the (non-proper) variation of $\gamma_x(t)$ defined by $f(s,t)=\gamma_s(t)$.
			Since $f(0,t)$ is a geodesic, 
			its first variation of energy with respect to $f$ equals 
			$$
			0=
			\left\langle \frac{df}{ds}(0,t_0),\frac{df}{dt}(0,t_0)\right\rangle
			-
			\left\langle \frac{df}{ds}(0,0),\frac{df}{dt}(0,0)\right\rangle.
			$$
			The second term on the right hand side is zero since $f(0,t)=\gamma_x(t)$ is orthogonal to $N_0=\partial M$. Therefore, the first term on the right hand side must also be zero. This means that $\gamma_x(t_0)$ is orthogonal to $N_{t_0}$ for every $0\leq t_0\leq \epsilon$. The same argument applies for every $-\epsilon\leq t_0\leq 0$ and for every $x\in\partial M$.
			\par 
			Using this fact, the metric on $U_\epsilon$ is of the form $g(x,t)dx^2+dt^2$. We want to alter this metric on $t\in(0,\epsilon]$ so that $\partial M_\epsilon$ is convex. Following \cite{gluck1984}, we can do so by defining a smooth function $\phi(t):[-\epsilon,\epsilon]\to[-\epsilon,\epsilon]$ which is equal to $t$ on $[-\epsilon,\epsilon/3]$ and equal to $\epsilon$ on $[2\epsilon/3,\epsilon]$. We then define the metric on $U_\epsilon$ as $g_\epsilon=g(x,\phi(t))dx^2+dt^2$. Note that this agrees with the original metric on $M\subset M_\epsilon$.
			By Theorem \ref{theorem:main}, we obtain an orthogonal geodesic chord $\gamma$ on $(M_{\epsilon},g_\epsilon)$ of length at most $4\operatorname{diam}_\epsilon(M_{\epsilon}) + L_\epsilon(\partial M_{\epsilon})$ (see Figure \ref{fig:m_epsilon}). Here the subscript denotes that these measurements are made with respect to the metric $g_\epsilon$. 
			\par 
			By uniqueness of geodesics, $\gamma\cap U_\epsilon$ must coincide with some $\gamma_x(t)$ and hence it must cross $\partial M$ orthogonally and enter $M\setminus U_\epsilon$. Consider the arc of $\gamma\cap M$ between the first and second intersection points of $\gamma$ and $\partial M$. This is a geodesic segment in $M$ that is orthogonal to $\partial M$ at at least one endpoint. We want to bound the length of this curve in terms of the geometric parameters of $M$. We have 
			$$
			\operatorname{diam}_\epsilon(M_{\epsilon})\leq d+2\epsilon
			$$
			by the following argument. Consider $x,y\in M_\epsilon$. If $x,y\in M\subset M_\epsilon$, then $d_\epsilon(x,y)\leq d(x,y)\leq d$. If one or both of these points lie in $U_\epsilon\setminus M$, we can connect them to the nearest point in $\partial M$ by a geodesic of length at most $\epsilon$ and then apply the previous case.
			\par
			Next, we need to bound $L_\epsilon(\partial M_{\epsilon})$. We can pick $\epsilon$ small enough and modify the original metric on $U_\epsilon$ so that for any $\delta>0$, $ L(\partial M_{\epsilon})<P+\delta.$
			Therefore, the geodesic we obtained above has length at most
			$$
			4\operatorname{diam}_{\epsilon}(M_{\epsilon}) + L_{\epsilon}(\partial M_{\epsilon})
			\leq 4d +8\epsilon+P+\delta.
			$$
			By taking the limit as $\epsilon,\delta\to 0$, the compactness of $M$ assures that there is in fact such a geodesic of length at most 
			$4d+P.$
		\end{proof}
		\begin{figure}[ht]
			\centering
			\includegraphics[width=0.5\linewidth]{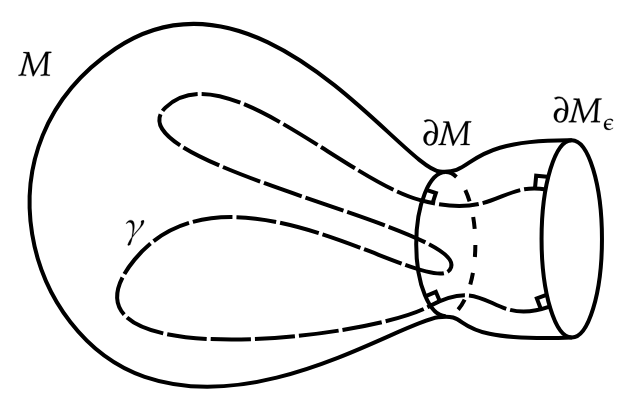}
			\caption{The orthogonal geodesic chord $\gamma$ in the manifold with boundary $M_\epsilon$.}
			\label{fig:m_epsilon}
		\end{figure}
		\noindent
		In the case that $M$ is a disk endowed with the Jacobi metric, we note that applying the limiting procedure detailed in the proof of Theorem A of \cite{gluck1984}-- that is, taking the limit of the chords we obtain as the width $\epsilon$ of the collar $U_\epsilon$ goes to zero-- will give rise to a brake orbit whose length is bounded with respect to the Jacobi metric. Moreover, if $M$ satisfies (for example) the non-resonance condition of \cite{gluck1984}, then the above proof of Theorem \ref{theorem:main_concave} will produce a genuine orthogonal geodesic chord.
		
		\bibliographystyle{abbrv}
		\bibliography{biblio}
		
\end{document}